\documentclass[reqno]{amsart}
\usepackage{amsmath, amsthm, amscd, amssymb, amsfonts, amsbsy}
\usepackage{latexsym, color, enumerate}
\usepackage{mathrsfs}
\usepackage{pxfonts}
\usepackage{bbm}
\usepackage[dvipsnames]{xcolor}
\usepackage{url}
\usepackage{enumerate}
\usepackage{todonotes}
\usepackage{marginnote}

\theoremstyle{plain}
\newtheorem{theorem}[equation]{Theorem}
\newtheorem{lemma}[equation]{Lemma}
\newtheorem{corollary}[equation]{Corollary}
\newtheorem{proposition}[equation]{Proposition}

\theoremstyle{definition}

\theoremstyle{remark}
\newtheorem{remark}[equation]{Remark}

\newcommand{\jj}{\mathrm{j}}

\numberwithin{equation}{section}

\def\dVert{\,\,\text{--}\kern-.46em\|}

\begin{document}

\title[Hypercontractivity of Poisson Semigroups with Orthogonal Polynomial Eigenfunctions]
{Hypercontractivity of Poisson Semigroups with Orthogonal Polynomial Eigenfunctions}

\author[M. Hormozi]{Mahdi Hormozi}
\address[M. Hormozi]{Beijing Institute of Mathematical Sciences and Applications (BIMSA), Beijing 101408, China}
\email{hormozi@bimsa.cn}

\author[J.-X. Zhu]{Jie-Xiang Zhu}
\address[J.-X. Zhu]{Department of Mathematics, Shanghai Normal University, Shanghai 200234, China}
\email{15110840006@fudan.edu.cn}

\subjclass[2020]{47D07, 60J35, 42C05}

\keywords{Hypercontractivity; Poisson semigroups; Bernstein subordination}

\begin{abstract}
For any $1 < p < q < \infty$, we investigate fixed-time hypercontractive bounds from $L^p$ to $L^q$ of Poisson semigroups associated with the Ornstein--Uhlenbeck, Laguerre and Jacobi operators. We prove that, in the Ornstein--Uhlenbeck and Laguerre cases, the Poisson semigroups fail to be $L^p \to L^q$ bounded for any fixed $t > 0$. In contrast, for Jacobi operators with $\alpha, \beta \ge -1/2$, the associated Poisson semigroups are ultracontractive, namely bounded from $L^1$ to $L^\infty$.

More generally, we study Bernstein subordinations of these semigroups and show that fixed-time hypercontractivity is not stable under subordination. The analysis relies on quantitative $L^q$-estimates for the corresponding orthogonal polynomial eigenfunctions, together with a bilinear test with the exponential family.
\end{abstract}

\maketitle

\section{Introduction}

The Ornstein--Uhlenbeck, Laguerre and Jacobi semigroups are three classical families of symmetric diffusion semigroups with orthogonal polynomial eigenfunctions. In dimension one, they are in fact the only such cases up to affine transformations; see for instance \cite{Urb, BGL, UrbWorkshop}.

For the Gaussian probability measure $d \gamma_d(x) = \pi^{-d/2} e^{-|x|^2}\, dx$ on $\mathbb R^d$, the Ornstein--Uhlenbeck operator (in the normalization of \cite{Urb})
$$
L_{\rm OU} := \frac12\Delta-\langle x,\nabla\rangle
$$
generates the Ornstein--Uhlenbeck semigroup $T_s = e^{sL_{\rm OU}}$ for $s \ge 0$. The celebrated Nelson--Gross theorem \cite{Nelson,Gross} yields the sharp hypercontractive estimate: for $1<p<\infty$ and $s>0$,
\begin{equation} \label{eq:intro-hypercontractivity}
\| T_s\|_{L^p(\gamma_d)\to L^{q(s)}(\gamma_d)}\le 1,
\qquad
q(s) := 1 + (p-1) e^{2s}.
\end{equation}
This regularity property is known to be equivalent to the logarithmic Sobolev inequality. More generally, a logarithmic Sobolev inequality follows from a suitable curvature condition $CD(\rho,\infty)$ with $\rho>0$; see e.g. \cite[Sect. 5.7]{BGL}. From this point of view, the Laguerre operator $L_\alpha$ on $(0,\infty)$ with $\alpha\ge -1/2$ and the Jacobi operator $L_{\alpha,\beta}$ on $(-1,1)$ with $\alpha,\beta\ge -1/2$ also satisfy a positive curvature condition, thus their associated Markov semigroups admit hypercontractive estimates of the form \eqref{eq:intro-hypercontractivity} with respect to their respective invariant probability measures.

Bernstein subordination is a standard procedure for constructing new semigroups from a given one via functional calculus. A function $f: [0,\infty)\to[0,\infty)$ is called a Bernstein function if $f\in C^\infty((0,\infty))$ and, for any  $\lambda > 0$ and $k \in \mathbb N_0$,
$$
(-1)^k f^{(k+1)}(\lambda) \ge 0.
$$
Equivalently, by the L\'evy--Khintchine formula, there exist $a,b\ge 0$ and a Borel measure $\nu$ on
$(0,\infty)$ satisfying
$$
\int_{(0,\infty)}(1\wedge s) \, d \nu (s) <\infty
$$
such that
\begin{equation}\label{eq:bernstein-LK}
f(\lambda)=a+b\lambda+\int_{(0,\infty)}\bigl(1- e^{-s\lambda}\bigr)\, d \nu(s).
\end{equation}
We refer to $(a,b,\nu)$ as the L\'evy--Khintchine triplet of $f$; see \cite[Definition~3.1]{SW} and \cite{SSV}. Throughout the paper we exclude the trivial case $f\equiv0$; in particular, $\liminf_{\lambda \to \infty} f(\lambda) > 0$. Let $(T_s)_{s\ge 0}$ be a symmetric Markov semigroup with generator $L$ and invariant probability measure $\mu$, and let $f$ be a Bernstein function given by \eqref{eq:bernstein-LK}. Bochner subordination yields the semigroup
$S_t^{\,f} : = e^{-t f(-L)}$; see, e.g., \cite{SW,GM}.  The particular choice $f(\lambda)=\sqrt{\lambda}$ leads to the Poisson semigroup
$$
P_t:= e^{-t(-L)^{1/2}}, \quad t>0,
$$
which is a fundamental tool in harmonic analysis. The subordinate semigroup $(S_t^{\, f})_{t\ge 0}$ is sub-Markov, in the sense that $S_t^{\, f} \mathbf 1 = e^{-at} \mathbf 1 \le \mathbf 1 $, and it is Markov if and only if $a = 0$ in \eqref{eq:bernstein-LK}. Moreover, subordination preserves contractivity on $L^p(\mu)$ for all $1\le p\le\infty$; however, on a probability space these properties do not, in general, imply any off-diagonal $L^p \to L^q$ bound with $1 < p < q < \infty$. For fixed $t > 0$, we therefore ask whether the operator $P_t$, and more generally $S_t^{\, f}$, is bounded from $L^p(\mu)$ to $L^q(\mu)$. In this paper we investigate this question for the Ornstein--Uhlenbeck, Laguerre and Jacobi semigroups.

Throughout, $C$ denotes a positive constant whose value may change from line to line; subscripts indicate dependence. We write $A\lesssim B$ if $A\le CB$ with a constant independent of the main asymptotic parameter, and $A\simeq B$ if $A\lesssim B\lesssim A$. When the underlying measure space is clear, we use $\| \cdot \|_{p\to q}$ for operator norms and $\langle g, h\rangle_\mu := \int g h \, d\mu$ for the duality pairing; we also set $a \wedge b = \min\{a, b\}$. We shall refer to the Poisson semigroups associated with the Ornstein--Uhlenbeck, Laguerre and Jacobi operators as the Poisson--Hermite, Poisson--Laguerre and Poisson--Jacobi semigroups, respectively, and we write the latter two as $(P_t^\alpha)_{t\ge 0}$ and $(P_t^{\alpha, \beta})_{t\ge 0}$ to make the parameter dependence explicit. We now state our main results on the hypercontractivity of these Poisson semigroups.

\begin{theorem}\label{thm:intro-main1}
Here and below, all $L^p$--norms and operator norms are taken on the natural invariant probability spaces of the corresponding semigroups.
\begin{enumerate}[(i)]
\item Let $(P_t)_{t\ge0}$ be the Poisson--Hermite semigroup. Then, for any $1<p<q<\infty$ and $t>0$,
$$
\| P_t \|_{p \to q} = +\infty.
$$

\item Let $(P_t^\alpha)_{t\ge0}$ be the Poisson--Laguerre semigroup with parameter $\alpha > -1$. Then, for any $1<p<q<\infty$ and  $t>0$,
$$
\| P_t^{\alpha} \|_{p \to q} = +\infty.
$$

\item Let $(P_t^{\alpha, \beta})_{t\ge 0}$ be the Poisson--Jacobi semigroup with parameters $\alpha, \beta \ge -1/2$. Then, for any $1 \le p \le q \le \infty$ and  $t > 0$,
$$
\big\| P_t^{\alpha, \beta} \big\|_{p \to q} \lesssim (1 \wedge t)^{-2(p^{-1} - q^{-1})(\max\{ \alpha, \beta \} +1 )}.
$$
\end{enumerate}
\end{theorem}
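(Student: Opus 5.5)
For parts (i) and (ii) I will use a bilinear test against an exponential family. Eigenvalues of $-L_{\rm OU}$ are $n$ with Hermite eigenfunctions (degree $n$), so $P_t$ multiplies the degree-$n$ component by $e^{-t\sqrt n}$. For the Laguerre operator the eigenvalues are also linear in $n$, so the same structure holds. If $\|P_t\|_{p\to q}<\infty$, then $|\langle P_t g,h\rangle_\mu|\le C\|g\|_p\|h\|_{q'}$ for all $g,h$. I take $g=e_a$ and $h=e_b$ from an exponential family whose $L^r$ norms are explicit. In the Gaussian case in dimension one (tensorization reduces to $d=1$), I use $e_a(x)=e^{2ax-a^2}$, the Hermite generating function $\sum_n H_n(x)a^n/n!$. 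Its norm satisfies $\|e_a\|_r= e^{(r-1)a^2}$, since $\int e^{2rax-ra^2}d\gamma=e^{(r^2-r)a^2}$. Mehler orthogonality gives $\langle H_n,H_m\rangle=\delta_{nm}2^nn!$, so
$$\langle P_t e_a,e_b\rangle=\sum_n e^{-t\sqrt n}\frac{(2ab)^n}{n!}.$$
Put $\lambda=2ab$. The sum is $\ge \tfrac12 e^{\lambda-t\sqrt{\lambda}-C}$ by concentration of the Poisson weights near $n\approx\lambda$. The right-hand side is $\exp\bigl((p-1)a^2+(q'-1)b^2\bigr)$. I optimise by taking $a^2(p-1)=b^2(q'-1)$, which gives exponent $2ab\sqrt{(p-1)(q'-1)}=\lambda\sqrt{(p-1)/(q-1)}$. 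Since $p<q$, the factor $\sqrt{(p-1)/(q-1)}$ is strictly less than $1$. Hence
$$e^{\lambda-t\sqrt\lambda}\big/e^{\lambda\sqrt{(p-1)/(q-1)}}\to\infty \quad\text{as } \lambda\to\infty,$$
and the operator norm is infinite. The point is that the Poisson multiplier $e^{-t\sqrt n}$ decays subexponentially in $n$, while the Nelson threshold needs $e^{-sn}$.

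For Laguerre I use the generating function
$$\sum_n L_n^\alpha(x)w^n=(1-w)^{-\alpha-1}e^{-xw/(1-w)},\qquad |w|<1,$$
or equivalently test functions $e^{-cx}$ with $c$ possibly negative but $c>-1/r$ so they lie in $L^r(x^\alpha e^{-x}dx)$. The $L^r$ norms are Gamma integrals, $\|e^{-cx}\|_r^r\propto(1+rc)^{-\alpha-1}$. The coefficients are explicit: $\langle e^{-cx},L_n^\alpha\rangle_\mu\propto \binom{n+\alpha}{n}\bigl(c/(1+c)\bigr)^n (1+c)^{-\alpha-1}$. So
$$\langle P_t e_{c},e_{c'}\rangle=\sum_n e^{-t\sqrt n}\,w_n\,\rho^n,$$
with $w_n$ polynomial in $n$ and $\rho=cc'/((1+c)(1+c'))$. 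I let $c\downarrow -1/p$ and $c'\downarrow -1/q'$. Then $\rho\to \dfrac{(1/p)(1/q')}{(1-1/p)(1-1/q')}$, and this limit exceeds $1$ exactly when $1/p+1/q'>1$, i.e. $p<q$. So for suitable admissible $c,c'$ we get $\rho\ge1$, the series diverges against finite norms, and the norm is infinite. I must check sign conventions; negative $c$ makes $c/(1+c)$ negative, but the product of two such factors is positive. The main technical obstacle is the delicate bookkeeping of normalisations and of the admissible parameter range, so that both norms are finite while $\rho>1$ (or $\rho\to1^-$ with the norms blowing up more slowly than the series). If exact divergence fails, I will fall back on the same asymptotic comparison as in the Hermite case.

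For (iii) I prove an $L^1\to L^\infty$ bound and interpolate. Contractivity on every $L^p$ is known, and Riesz–Thorin with duality then gives the stated exponent from
$$\|P_t\|_{1\to\infty}\lesssim (1\wedge t)^{-2(\max\{\alpha,\beta\}+1)}.$$
The kernel is $\sum_n e^{-t\sqrt{\lambda_n}}\,p_n(x)p_n(y)/h_n$ with $\lambda_n=n(n+\alpha+\beta+1)\simeq n^2$, so $\sqrt{\lambda_n}\simeq n$. The normalised Jacobi polynomials satisfy
$$\|p_n\|_\infty^2/h_n=p_n(1)^2/h_n\simeq n^{2\max\{\alpha,\beta\}+2}$$
for $\alpha,\beta\ge-1/2$ (Szegő). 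Hence the kernel is bounded by $\sum_n e^{-ctn}n^{2\gamma+1}$ with $\gamma=\max\{\alpha,\beta\}$. This sum is $\simeq t^{-2\gamma-2}$ for $t\le1$ and $O(1)$ for $t\ge1$, which gives the claim.
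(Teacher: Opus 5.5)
\textbf{The gap is in (ii).} Your formula $\langle P_t^\alpha e_c,e_{c'}\rangle_{\mu_\alpha}=(1+c)^{-\alpha-1}(1+c')^{-\alpha-1}\sum_n e^{-t\sqrt n}\binom{n+\alpha}{n}\rho^n$ comes from Parseval. It is therefore available only when $e_c,e_{c'}\in L^2(\mu_\alpha)$, i.e.\ $c,c'>-1/2$.

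In that range $|c/(1+c)|<1$ and $|c'/(1+c')|<1$, so $|\rho|<1$. In fact $\rho<1$ is equivalent to $1+c+c'>0$, i.e.\ to $e_ce_{c'}\in L^1(\mu_\alpha)$. There the series is bounded by $\sum_n e^{-t\sqrt n}\binom{n+\alpha}{n}<\infty$ and the prefactors are bounded. So neither divergence nor your fallback (norms blowing up more slowly than the series) gives a contradiction. For $p\ge2$, the blow-up of $\|e_c\|_{L^p(\mu_\alpha)}$ as $c\downarrow-1/p$ even goes the wrong way.

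Divergence needs $\rho>1$; at $\rho=1$ the series still converges. But $\rho>1$ forces at least one of $e_c,e_{c'}$ out of $L^2(\mu_\alpha)$. At that point the series representation of the pairing, with $P_t^\alpha$ the assumed bounded extension $L^p\to L^q$, is exactly what must be proved. This is not bookkeeping. Truncations $e_c\mathbf 1_{(0,R)}$ destroy the sign structure of the coefficients needed to pass to the limit. Regularizing by $e^{\delta L_\alpha}$ only multiplies coefficients by $e^{-\delta n}$, which does not bring $e_c$ into $L^2$ as $\delta\to0$ once $|c/(1+c)|>1$.

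\textbf{How to close it.} The paper does this in Theorem~\ref{thm:parseval} via Lemma~\ref{lem:approximation}. The multipliers with symbol $e^{-\varepsilon\lambda^\omega}$, $\omega>1$, are uniformly bounded on $L^p$ and $L^{q'}$ by a holomorphic functional calculus and converge strongly to $I$. They map $e_c$ into $L^2$ while keeping products of coefficients nonnegative, so monotone convergence as $\varepsilon\to0$ yields the identity. This forces $\limsup a_n^{1/n}\le (p-1)/(q-1)$, which $a_n=e^{-t\sqrt n}$ violates.

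A more elementary repair of your route uses positivity. Pairing against polynomials, which are dense in $L^{p'}(\mu_\alpha)$, shows $e^{sL_\alpha}e_c=\kappa(s)e_{c_s}$ with $\kappa(s)>0$ and $c_s/(1+c_s)=e^{-s}c/(1+c)$. Writing $P_t^\alpha=\int_0^\infty e^{sL_\alpha}\,\eta_t(ds)$, with $\eta_t$ the $\tfrac12$-stable law of Theorem~\ref{thm:ultra-poisson}, Tonelli gives $\int (P_t^\alpha e_c)\,e_{c'}\,d\mu_\alpha=\int_0^\infty\kappa(s)\int e_{c_s}e_{c'}\,d\mu_\alpha\,\eta_t(ds)=+\infty$ when $\rho>1$, since $1+c_s+c'\le0$ for $s\le\log\rho$. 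This contradicts $P_t^\alpha e_c\in L^q$, $e_{c'}\in L^{q'}$. For $\alpha\ge-1/2$, the eigenfunction argument of Section~\ref{sec:blow-up} also works.

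\textbf{Parts (i) and (iii).} Both are correct, but they take different routes from the paper's proof of Theorem~\ref{thm:intro-main1}.
\begin{itemize}
\item In (i), your Gaussian exponentials lie in every $L^r(\gamma_1)$, so Parseval is legitimate. The bilinear test is the $b=0$ case of Section~\ref{sec:blow-up-ou-lauguerre}, and it covers all $1<p<q$ without duality or Proposition~\ref{prop:Hermite-exp-growth}.
\item In (iii), your direct kernel bound via Szeg\H{o}'s sup-norm estimate replaces the heat-kernel bound plus Theorem~\ref{thm:ultra-poisson}. However, $\|\jj_n^{(\alpha,\beta)}\|_\infty^2\simeq n^{2\max\{\alpha,\beta\}+1}$, not $n^{2\max\{\alpha,\beta\}+2}$, because $J_n^{(\alpha,\beta)}(1)\simeq n^\alpha$ and the squared $L^2$ norm is $\simeq n^{-1}$. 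Your sum correctly uses the exponent $2\gamma+1$, which is what yields $t^{-2(\gamma+1)}$.
\end{itemize}
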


Theorem~\ref{thm:intro-main1} shows that no fixed-time $L^p \to L^q$ improvement for the Poisson semigroups holds in the Gaussian and Laguerre settings. The reason is spectral: for any fixed $q>2$ and $p < q$, the $L^q/L^p$-norm ratio of the polynomial eigenfunctions grows exponentially in the degree, while the Poisson multiplier decays only as $e^{-t\sqrt{n}}$. In contrast, the Jacobi semigroup with parameters $\alpha, \beta \ge -1/2$ satisfies the ultracontractive estimate
$$
\big\| e^{s L_{\alpha, \beta}}  \big\|_{1 \to \infty} \lesssim (1 \wedge s)^{- (\max \{\alpha, \beta  \}+1)}
$$
which, being preserved under Poisson subordination, implies that the associated Poisson--Jacobi semigroup is also ultracontractive. Theorem~\ref{thm:intro-main1} also shows that the Jacobi hypercontractive/ultracontractive estimates are not stable under the standard
Jacobi-to-Laguerre and ultraspherical-to-Hermite limit relations; see \cite[Appendix~B]{UrbWorkshop} and
Section~\ref{sec:unified-characterization}.

Finally, we turn to the general subordinated semigroups $S_t^{\,f}= e^{-t f(-L)}$ in the three settings considered above.

\begin{theorem}\label{thm:intro-main2}
Let $f$ be a Bernstein function with L\'evy--Khintchine triplet $(a, b, \nu)$, and let $S_t^{\, f}$ denote the subordination of the symmetric Markov semigroup $(e^{s L})_{s\ge0}$.
\begin{enumerate}[(i)]
\item Let $L = L_{\rm{OU}}$ be the Ornstein--Uhlenbeck operator. Then, for any $1< p < \infty$ and $t > 0$,
\begin{align*}
\big\|  S_t^{\, f} \big\|_{p \to q} = \begin {cases}
e^{-at} & \text {if \, $q \le 1 + (p-1) e^{2bt}$,}\\
+\infty      & \text{if \, $q > 1+ (p-1)e^{2bt}$.}
\end {cases}
\end{align*}

\item Let $L = L_{\alpha}$ be the Laguerre operator with parameter $\alpha \ge -1/2$. Then, for any $1< p < \infty$ and $t > 0$,
\begin{align*}
\big\|  S_t^{\, f} \big\|_{p \to q} = \begin {cases}
e^{-at} & \text {if \, $q \le 1 + (p-1) e^{bt}$,}\\
+\infty      & \text{if \, $q > 1 + (p-1) e^{bt}$.}
\end {cases}
\end{align*}

\item Let $L = L_{\alpha, \beta}$ be the Jacobi operator with parameters $\alpha, \beta \ge -1/2$. Assume there exists $\theta \in (0, 1]$ such that
$$
\liminf_{\lambda \to \infty} \lambda^{-\theta} f(\lambda) > 0.
$$
Then, for any $1 \le p \le q \le \infty$ and $t > 0$, we have
$$
\big\| S_t^{\, f} \big\|_{p \to q}
\lesssim (1 \wedge t)^{-\frac{(p^{-1} - q^{-1})(\max\{ \alpha, \beta \} + 1)}{\theta}}.
$$
\end{enumerate}
\end{theorem}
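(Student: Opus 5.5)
Our approach to Theorem~\ref{thm:intro-main2} is to split $f$ into its drift part and its jump part, and to treat the three cases separately. Write $f(\lambda)=a+b\lambda+g(\lambda)$ with $g(\lambda)=\int(1-e^{-s\lambda})\,d\nu(s)$. Then $g$ is a Bernstein function with $g(\lambda)=o(\lambda)$ as $\lambda\to\infty$, and by functional calculus
$$
S_t^{\,f}=e^{-at}\, e^{tbL}\, S_t^{\,g},
$$
where the three factors commute.

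\emph{Positive direction in (i) and (ii).} Here $S_t^{\,g}$ is a Markov contraction on every $L^p$. The Ornstein--Uhlenbeck semigroup $e^{sL_{\rm OU}}$ satisfies the Nelson bound \eqref{eq:intro-hypercontractivity}, and the Laguerre semigroup satisfies the analogue with exponent $e^{s}$ (its log-Sobolev constant, with curvature $1/2$ in this normalization, for $\alpha\ge-1/2$). Applying these with $s=bt$ gives $\|S_t^{\,f}\|_{p\to q}\le e^{-at}$ whenever $q\le 1+(p-1)e^{2bt}$, respectively $q\le 1+(p-1)e^{bt}$. Testing on $\mathbf 1$ gives the reverse inequality, so the norm equals $e^{-at}$.

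\emph{Negative direction in (i) and (ii).} We use the bilinear exponential test mentioned in the abstract. In the Hermite case take $h_\xi(x)=e^{\langle \xi,x\rangle}$. Its $L^r(\gamma_d)$ norm is $e^{(r-1)|\xi|^2/4}$ up to a constant independent of $\xi$. Expanding in Hermite polynomials, $h_\xi=\sum_n H_n$-components with weights $|\xi|^{2n}/n!$-type coefficients, and $e^{sL}h_\xi$ is again a normalized exponential. The pairing $\langle S_t^{\,f}h_\xi,h_\eta\rangle$ can therefore be computed exactly as a series
$$
\sum_n e^{-tf(n)}\,c_n\,(\xi\cdot\eta)^n,
$$
using $L_{\rm OU}$ eigenvalues $-n$. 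Since $f(n)=a+bn+o(n)$, for large $n$ we have $e^{-tf(n)}\ge e^{-at}e^{-(b+\varepsilon)tn}$. Lower-bounding the series by its single largest term, with $\eta$ parallel to $\xi$ and $|\xi|\to\infty$, gives growth like $\exp\bigl(e^{-(b+\varepsilon)t}|\xi||\eta|/\ldots\bigr)$. We compare this against
$$
\|h_\xi\|_p\,\|h_\eta\|_{q'}=\exp\Bigl(\tfrac{(p-1)|\xi|^2+(q'-1)|\eta|^2}{4}\Bigr).
$$
Optimizing the ratio $|\eta|/|\xi|$ shows that the ratio of the two sides blows up exactly when $e^{-2(b+\varepsilon)t}>(p-1)(q'-1)$, i.e.\ when $q>1+(p-1)e^{2(b+\varepsilon)t}$. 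Letting $\varepsilon\to0$ yields the threshold for every $q>1+(p-1)e^{2bt}$, since the inequality is strict. The main obstacle is making the single-term lower bound sharp: we need a Laplace/saddle-point estimate showing that the terms with $n\approx$ (optimal index) dominate up to subexponential factors, and that the $o(n)$ in $g(n)$ costs only $e^{o(|\xi||\eta|)}$. The Laguerre case is identical with $h_\xi(x)=e^{\xi x}$ for $\xi<1$, using the Laguerre generating function $\sum L_n^\alpha(x)w^n=(1-w)^{-\alpha-1}e^{-xw/(1-w)}$. Here the eigenvalues are $-n$ in the normalization producing the exponent $e^{bt}$, and the norms blow up at the edge $\xi\uparrow 1/r$ rather than at infinity. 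The corresponding asymptotic analysis near that edge is the delicate step.

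\emph{Part (iii).} Here we use ultracontractivity plus subordination. From $\|e^{sL_{\alpha,\beta}}\|_{1\to\infty}\lesssim(1\wedge s)^{-\gamma}$ with $\gamma=\max\{\alpha,\beta\}+1$, the spectral gap (eigenvalues $n(n+\alpha+\beta+1)$) gives the kernel bound
$$
\sup_{x,y}\Bigl|\sum_n e^{-tf(\lambda_n)}\phi_n(x)\phi_n(y)\Bigr|\le \sum_n e^{-tf(\lambda_n)}\|\phi_n\|_\infty^2,
$$
with $\|\phi_n\|_\infty^2\lesssim n^{2\gamma}$. The hypothesis gives $f(\lambda)\ge c\lambda^\theta$ for $\lambda\ge\lambda_0$, and $f\ge0$ otherwise, so the sum is
$$
\lesssim 1+\sum_n n^{2\gamma}e^{-ctn^{2\theta}}\lesssim (1\wedge t)^{-(2\gamma+1)/(2\theta)}.
$$
This is slightly worse than the claim. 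To obtain the sharp exponent, we instead write $S_t^{\,f}=\int e^{sL}\,\eta_t(ds)$ with the subordinator law $\eta_t$, and use $\mathbb E[(1\wedge X_t)^{-\gamma}]\lesssim(1\wedge t)^{-\gamma/\theta}$. This last bound follows from $\mathbb E[X_t^{-\gamma}]=\Gamma(\gamma)^{-1}\int_0^\infty u^{\gamma-1}e^{-tf(u)}\,du$ together with $f(u)\ge cu^\theta$ for large $u$. This gives the $1\to\infty$ bound $(1\wedge t)^{-\gamma/\theta}$. Riesz--Thorin interpolation with the trivial bounds $\|S_t^{\,f}\|_{p\to p}\le1$ then yields the stated $p\to q$ estimate with exponent $(p^{-1}-q^{-1})\gamma/\theta$.
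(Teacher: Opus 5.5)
\textbf{Gap in the blow-up half of (ii).} You call the Laguerre case ``identical'' to the Hermite one. That is, you obtain $\langle S_t^{\,f}g_{\tau_1},g_{\tau_2}\rangle$, with $g_\tau(x)=e^{\tau x}$, by expanding in the $\ell_n^\alpha$ and applying Parseval. This would give $(1-\tau_1)^{-(\alpha+1)}(1-\tau_2)^{-(\alpha+1)}\sum_n e^{-tf(n)}\frac{\Gamma(n+\alpha+1)}{\Gamma(\alpha+1)\,n!}\varrho^n$ with $\varrho=\frac{\tau_1\tau_2}{(1-\tau_1)(1-\tau_2)}$. But $g_\tau\in L^2(\mu_\alpha)$ only for $\tau<1/2$, and for $\tau_1,\tau_2<1/2$ one has $\varrho<1\le e^{bt}$. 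So wherever Parseval is legitimate, the series converges harmlessly and detects nothing. Any test that sees the threshold needs $\varrho>e^{bt}\ge 1$. This forces $\tau_1>1/2$ or $\tau_2>1/2$, i.e.\ a test function outside $L^2(\mu_\alpha)$. For such pairs the series formula is exactly what must be proved, and it does not follow from the expansion.

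This is the difficulty the paper isolates and resolves in Theorem~\ref{thm:parseval} via Lemma~\ref{lem:approximation}. The multipliers $M_\varepsilon$ with symbol $e^{-\varepsilon\lambda^\omega}$, $\omega>1$ close to $1$, are uniformly bounded on $L^p(\mu_\alpha)$ and $L^{q'}(\mu_\alpha)$ by the Carbonaro--Dragi\v{c}evi\'c holomorphic functional calculus. They converge strongly to the identity and map $g_\tau$ into $L^2(\mu_\alpha)$. Parseval for $\langle S_t^{\,f}M_\varepsilon g_{\tau_1},M_\varepsilon g_{\tau_2}\rangle$ plus monotone convergence then gives \eqref{eq:parseval-I}. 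For $S_t^{\,f}$ specifically, a positivity argument should also work: write $S_t^{\,f}$ as the subordination integral of $e^{uL_\alpha}$, compute $e^{uL_\alpha}g_\tau$ from the Hille--Hardy kernel rather than from the expansion, and apply Tonelli. Your outline contains no such step.

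\textbf{Misplaced difficulties, and the parts that are sound.} Once \eqref{eq:parseval-I} is available, no asymptotics at the edge $\tau\uparrow 1/r$ are needed. If $q>1+(p-1)e^{bt}$, fix $\tau_1<1/p$, $\tau_2<1/q'$ with $\varrho(\tau_1,\tau_2)>e^{bt}$; this is possible because $\sup\varrho=(q-1)/(p-1)$. Since $e^{-tf(n)/n}\to e^{-bt}$, the nonnegative series diverges. Yet the pairing is at most $\|S_t^{\,f}\|_{p\to q}\|g_{\tau_1}\|_{L^p(\mu_\alpha)}\|g_{\tau_2}\|_{L^{q'}(\mu_\alpha)}$, and both norms are finite.

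Similarly, the saddle-point issue you raise in (i) is not a real obstacle. For a lower bound, one term of the nonnegative series, with $n\approx e^{-(b+\varepsilon)t}\,\xi\cdot\eta/2$, together with Stirling, suffices, exactly as in the paper. (Take $h_\xi=e^{\langle\xi,x\rangle-|\xi|^2/4}$ so that your norm formula is exact.)

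The positive directions coincide with the paper's. Your argument for (iii) is a valid and more elementary alternative. The paper transfers the super-Poincar\'e inequality through Proposition~\ref{prop:SW-superP} and returns to ultracontractivity via Wang's equivalence theorem. You instead subordinate the heat-kernel bound directly, as in Theorem~\ref{thm:ultra-poisson}, using $\mathbb E[X_t^{-\gamma}]=\Gamma(\gamma)^{-1}\int_0^\infty u^{\gamma-1}e^{-tf(u)}\,du$ with your $\gamma=\max\{\alpha,\beta\}+1$; this makes the role of $\theta$ transparent. Incidentally, $\|\jj_n^{(\alpha,\beta)}\|_\infty^2\simeq n^{2\gamma-1}$ rather than $n^{2\gamma}$, so your first kernel-sum estimate already gives the sharp exponent $\gamma/\theta$.
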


For the Ornstein--Uhlenbeck and Laguerre settings, a nutural dichotomy emerges in the fixed-time $L^p \to L^q$ behaviors of the subordinated semigroups. In the boundedness regime, the $L^p \to L^q$ bounds follow from hypercontractive estimates for the underlying diffusion semigroups. In the blow-up regime, however, the spectral obstruction used earlier is not sharp enough, especially when $1 < p < 2$; we therefore employ a bilinear test with the exponential family $g_\tau(x) = e^{\tau x}$, inspired by \cite{Nelson}. In particular, this approach yields an explicit $L^p \to L^q$ boundedness criterion for non-negative Laguerre multipliers (an ``exponential obstruction''); see Section~\ref{sec:blow-up-ou-lauguerre} below. For the Jacobi setting, the required fixed-time bounds follow from a super Poincar\'e inequality, which is equivalent to ultracontractivity in this framework.

The paper is organized as follows. Section~\ref{sec:settings} collects preliminaries and auxiliary results used throughout the paper. Section~\ref{sec:proof-of-1} contains the proof of Theorem~\ref{thm:intro-main1} and discusses the degeneration of hypercontractive bounds for Poisson semigroups along Jacobi limit transitions. Section~\ref{sec:OU-bernstein-drift} contains the proof of Theorem~\ref{thm:intro-main2}.

\section{Preliminaries}\label{sec:settings}

\subsection{Facts on the Ornstein--Uhlenbeck, Laguerre and Jacobi semigroups} \label{sec:facts}

This section collects basic facts on the Ornstein--Uhlenbeck, Laguerre and Jacobi semigroups used throughout the paper; see, e.g., \cite{Urb, BGL, UrbWorkshop} for details.

Let $d \gamma_d(x) = \pi^{-d/2} e^{-|x|^2} \, dx$ be the Gaussian measure on $\mathbb R^d$. The Ornstein--Uhlenbeck generator is
$$
L_{\rm OU} := \frac12\Delta-\langle x,\nabla\rangle,
$$
with invariant measure $\gamma_d$; (see \cite[(2.14)--(2.16)]{Urb} for the closely related normalizations). In dimension one, the eigenfunctions of the Ornstein--Uhlenbeck operator are given by the Hermite polynomials. We denote by $(H_n)_{n\ge0}$ the Hermite polynomials defined by the generating function
\begin{equation} \label{eq:OU-generating}
\exp(2xt - t^2) = \sum_{n = 0}^{\infty} \frac{H_n(x)}{n!} \, t^n,
\end{equation}
and by $(h_n)_{n\ge0}$ their $L^2(\gamma_1)$--normalized versions. In higher dimensions, the eigenfunctions of $L_{\rm OU}$ are given by tensor
products of the one-dimensional Hermite polynomials. More precisely, the family
\[
h_{\bf k}(x) = \prod_{j=1}^d h_{k_j}(x_j),\qquad {\bf k} = (k_1, \ldots, k_d) \in\mathbb N_0^d,
\]
forms an orthonormal basis of $L^2(\gamma_d)$, and
\[
L_{\rm OU} h_{\bf k} = -|{\bf k}|\, h_{\bf k}, \qquad |{\bf k}|: = \sum_{j=1}^d k_j.
\]
In particular, the spectrum of $L_{\rm OU}$ is $\mathbb N_0$. We shall later use $L^q(\gamma_1)$ bounds for Hermite polynomials; see Section~\ref{sec:blow-up}.

Fix $\alpha>-1$ and let $\mu_\alpha$ be the normalized Gamma measure on $(0,\infty)$,
\[
d \mu_\alpha (x) : = \frac{1}{\Gamma(\alpha+1)}\, x^\alpha e^{-x}\, dx.
\]
The Laguerre operator in the normalization of \cite{Urb} is
\[
L_\alpha f(x) := x f''(x)+ (\alpha+1-x)f'(x),\qquad x>0,
\]
which is symmetric on $L^2(\mu_{\alpha})$ and generates the Laguerre semigroup $(e^{s L_\alpha})_{s\ge0}$. The Laguerre polynomials $(L_n^\alpha)_{n\ge0}$ may be characterized by the generating function
\begin{equation} \label{eq:Laguerre-generating}
\sum_{n=0}^\infty L_n^\alpha(x)\,r^n=(1-r)^{-\alpha-1}\exp\!\Bigl(-\frac{r}{1-r}\,x\Bigr),
\qquad |r|<1 .
\end{equation}
Let $(\ell_n^\alpha)_{n\ge0}$ denote the $L^2(\mu_\alpha)$--normalized Laguerre polynomials, which form an orthonormal basis of
$L^2(\mu_\alpha)$ and satisfy $L_\alpha \ell_n^\alpha=-n \, \ell_n^\alpha $ for $n \in \mathbb N_0$.

From the point of view of the curvature-dimension condition, if $\alpha \ge -1/2$, the Laguerre operator $L_\alpha$ satisfies $CD(1/2,\infty)$; see e.g. \cite[Sect. 2.7]{BGL}. Consequently, the Laguerre semigroup is hypercontractive (see \cite[Chap.~5]{BGL}): for $1< p < \infty$ and $s > 0$,
\begin{equation} \label{eq:pre-hypercontractivity}
\big\| e^{s L_\alpha} \big\|_{p \to q(s)}\le 1,
\qquad q(s) := 1 + (p-1)e^{s}.
\end{equation}

Fix $\alpha,\beta>-1$ and let $\mu_{\alpha,\beta}$ be the Jacobi probability measure on $(-1,1)$,
\[
d \mu_{\alpha, \beta}(x) : =
Z_{\alpha,\beta}^{-1} \, (1-x)^{\alpha}(1+x)^{\beta}\mathbf 1_{(-1,1)}(x)\, dx,
\qquad
Z_{\alpha,\beta} := 2^{\alpha+\beta+1}B(\alpha+1,\beta+1).
\]
The Jacobi operator is
\[
L_{\alpha, \beta} f (x) := (1 - x^2) f''(x) - \big[(\alpha + \beta + 2)x + \alpha - \beta   \big] f'(x),\qquad x \in (-1, 1),
\]
which is symmetric on $L^2(\mu_{\alpha, \beta})$ and generates the Jacobi semigroup $(e^{s L_{\alpha, \beta}})_{s\ge0}$. The eigenfunctions of $L_{\alpha,\beta}$ are the Jacobi polynomials. We write $\big( J_n^{(\alpha,\beta)} \big)_{n\ge0}$ for the classical Jacobi polynomials and $\big( \jj_n^{(\alpha,\beta)} \big)_{n\ge0}$ for their $L^2(\mu_{\alpha,\beta})$--normalized versions, which form an orthonormal basis of $L^2(\mu_{\alpha,\beta})$ and satisfy
\[
L_{\alpha,\beta} \, \jj_n^{(\alpha,\beta)}=- \lambda_n^{(\alpha, \beta)}\, \jj_n^{(\alpha,\beta)}, \qquad
\lambda_n^{(\alpha, \beta)} := n(n+\alpha+\beta+1),\qquad n \in \mathbb N_0.
\]
We refer to \cite{Szego} for background on Jacobi polynomials. The Jacobi heat kernel, namely the transition kernel of $e^{s L_{\alpha, \beta}}$ with respect to $\mu_{\alpha, \beta}$, is then given by
\[
G_s^{(\alpha,\beta)} (x, y) := \sum_{n = 0}^{\infty} e^{-s \lambda_n^{(\alpha, \beta)}} \, \jj_n^{(\alpha,\beta)}(x) \, \jj_n^{(\alpha,\beta)}(y), \qquad x, y \in (-1, 1), \,  s > 0.
\]
If $\alpha, \beta \ge -1/2$, bounds on $G_s^{(\alpha,\beta)}$ due to \cite{NowSj} imply the following ultracontractive estimate:
\begin{equation} \label{eq:jacobi-ultra}
\big\| e^{s L_{\alpha, \beta}}  \big\|_{1 \to \infty} \lesssim (1 \wedge s)^{- (\max \{\alpha, \beta  \}+1)}, \qquad s > 0,
\end{equation}
which alternatively follows from a suitable Sobolev inequality as developed in \cite{B96}.

\subsection{Spectral obstructions for hypercontractivity}\label{sec:front-results}

Some of our $L^p\to L^q$ blow-up results for Bernstein subordinations rely on the following simple lemma.

\begin{lemma}\label{lem:unified-eigen-test}
Let $(X,\mu)$ be a probability space and let $L$ be a self-adjoint operator on $L^2(\mu)$ such that $-L\ge0$.
Let $1<p<q<\infty$. Assume that $-L$ has discrete spectrum with eigenvalues $(\lambda_n)_{n\ge0}$ listed in the increasing order counting multiplicities, with
$\lim_{n \to \infty} \lambda_n = + \infty$, and eigenfunctions $(\varphi_n)_{n\ge0}\subset L^p(\mu)\cap L^q(\mu)$, so that $L\varphi_n=-\lambda_n\varphi_n$. Let $f$ be a Bernstein function.
For $t>0$ define the subordinated semigroup $S_t^{\, f}:=e^{-t f(-L)}$, so that $S_t^{\, f} \varphi_n = e^{-t f(\lambda_n)} \, \varphi_n$. Then,
$$
\big\| S_t^{\, f} \big\|_{p \to q} \ge \sup_{n\ge0} e^{-tf(\lambda_n)} \, \frac{\| \varphi_n \|_{L^q(\mu)}}{\| \varphi_n \|_{L^p(\mu)}}.
$$
\end{lemma}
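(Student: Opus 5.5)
The argument is a direct test of the operator norm on single eigenfunctions. By definition,
$$
\big\| S_t^{\, f} \big\|_{p \to q} = \sup\Big\{ \frac{\|S_t^{\, f} g\|_{L^q(\mu)}}{\|g\|_{L^p(\mu)}} : g\in L^p(\mu),\ g\neq 0\Big\},
$$
with the value $+\infty$ allowed. Here $S_t^{\, f}$ is first defined on $L^2(\mu)$ by the spectral theorem and then considered on $L^p(\mu)$; if the operator is unbounded, the inequality holds trivially. So it suffices to restrict the supremum to the admissible test functions $g=\varphi_n$.

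\textbf{Step 1: admissibility of the eigenfunctions.} Each $\varphi_n$ lies in $L^2(\mu)$, being an eigenfunction of the self-adjoint operator, and lies in $L^p(\mu)\cap L^q(\mu)$ by hypothesis. Moreover $\|\varphi_n\|_{L^p(\mu)}>0$, since $\varphi_n\neq0$ as an element of $L^2(\mu)$. Hence $\varphi_n$ is a legitimate nonzero test function in every reasonable choice of domain: $L^p\cap L^2$, or the linear span of the eigenfunctions.

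\textbf{Step 2: the action of the subordinated semigroup.} By functional calculus, since $-L\varphi_n=\lambda_n\varphi_n$ and $f$ is a Borel function on $[0,\infty)$, we get $f(-L)\varphi_n=f(\lambda_n)\varphi_n$. Therefore
$$
S_t^{\, f}\varphi_n=e^{-tf(\lambda_n)}\varphi_n ,
$$
as already recorded in the statement. Taking $L^q$ norms gives
$$
\|S_t^{\, f}\varphi_n\|_{L^q(\mu)}=e^{-tf(\lambda_n)}\|\varphi_n\|_{L^q(\mu)} .
$$
Dividing by $\|\varphi_n\|_{L^p(\mu)}$ bounds the operator norm from below by $e^{-tf(\lambda_n)}\|\varphi_n\|_{L^q(\mu)}/\|\varphi_n\|_{L^p(\mu)}$, and taking the supremum over $n$ yields the claim.

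\textbf{Main subtlety: consistency of the $L^p$ operator with the $L^2$ functional calculus.} When $p<2$, a bounded extension of $S_t^{\, f}$ to $L^p(\mu)$ must agree with the $L^2$ definition on $L^p\cap L^2$. This is automatic because $L^2(\mu)$ is dense in $L^p(\mu)$ for a probability measure and $\varphi_n\in L^2(\mu)$. Alternatively, one can use the Bochner representation $S_t^{\, f}=\int_0^\infty e^{sL}\,\eta_t(ds)$, which defines the same operator on every $L^p(\mu)$ and acts on $\varphi_n$ by $\int e^{-s\lambda_n}\,\eta_t(ds)=e^{-tf(\lambda_n)}$. Either way, the action on $\varphi_n$ is unambiguous, and the lower bound follows.
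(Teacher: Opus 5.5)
Your proposal is correct and follows the paper's argument: test the operator norm on the normalized eigenfunction $\varphi_n/\|\varphi_n\|_{L^p(\mu)}$, use $S_t^{\, f}\varphi_n = e^{-tf(\lambda_n)}\varphi_n$, and take the supremum over $n$. Your remarks on why $\|\varphi_n\|_{L^p(\mu)}>0$ and why the $L^p$ extension agrees with the $L^2$ functional calculus are accurate; the paper leaves these points implicit.
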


\begin{proof}
For each $n$, we test the operator norm on $g = \varphi_n/\| \varphi_n \|_{L^p(\mu)}$,  so that $\|g \|_{L^p(\mu)}=1$. Hence
$$
\big\| S_t^{\, f} \big\|_{p \to q}
=\sup_{\|g\|_{L^p(\mu)}=1} \big\| S_t^{\, f} g \big\|_{L^q(\mu)} \ge \frac{\big\| S_t^{\, f} \varphi_n \big\|_{L^q(\mu)}}{\| \varphi_n \|_{L^p(\mu)}}
= e^{-t f(\lambda_n)} \frac{\| \varphi_n \|_{L^q(\mu)}}{\| \varphi_n \|_{L^p(\mu)}}.
$$
Taking the supremum over $n\ge0$ yields the claim.
\end{proof}

As a consequence of Lemma~\ref{lem:unified-eigen-test}, we obtain the following $L^p\to L^q$ blow-up criterion for Bernstein subordinations. This blow-up occurs because the decay of the spectral multiplier cannot compete with the growth of the $L^q/L^p$-norm ratio of the eigenfunctions.

\begin{theorem}\label{thm:SW-spectral-obstruction}
Assume the setting of Lemma~\ref{lem:unified-eigen-test}. Define
\[
\Theta_{p, q, f} : =\limsup_{n \to \infty} \frac{\log \big(\| \varphi_n \|_{L^q(\mu)}/ \| \varphi_n \|_{L^p(\mu)} \big)}{f(\lambda_n)} \in [0, +\infty].
\]
\begin{enumerate}[(i)]
\item If $\Theta_{p,q,f} = +\infty$, then $\big\| S_t^{\, f}  \big\|_{p \to q} = +\infty$ for any $t > 0$.
\item If $\Theta_{p,q,f} \in (0, \infty)$ and $\lim_{\lambda \to \infty}f(\lambda) = +\infty$,
then $\big\| S_t^{\, f}  \big\|_{p \to q} = +\infty$ for any $t \in (0, \Theta_{p, q, f})$.
\end{enumerate}
\end{theorem}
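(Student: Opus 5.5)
The plan is to apply Lemma~\ref{lem:unified-eigen-test} along a subsequence that realizes the $\limsup$ defining $\Theta_{p,q,f}$. Set
\[
R_n := \log\bigl(\|\varphi_n\|_{L^q(\mu)}/\|\varphi_n\|_{L^p(\mu)}\bigr).
\]
The lemma gives, for every $n$,
\[
\bigl\|S_t^{\,f}\bigr\|_{p\to q}\ \ge\ \exp\bigl(R_n - t f(\lambda_n)\bigr)
= \exp\Bigl(f(\lambda_n)\Bigl(\frac{R_n}{f(\lambda_n)}-t\Bigr)\Bigr).
\]
Two preliminary remarks keep this well defined. First, $\lambda_n\to\infty$ and $\liminf_{\lambda\to\infty}f(\lambda)>0$ (nontrivial $f$). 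Since $f$ is nondecreasing, $f(\lambda_n)$ is bounded below by some $c>0$ for $n$ large, so the ratios make sense. Second, since $\mu$ is a probability measure and $p<q$, H\"older gives $R_n\ge0$, so $\Theta_{p,q,f}\in[0,\infty]$.

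For (i), pick a subsequence $n_k$ with $R_{n_k}/f(\lambda_{n_k})\to+\infty$. The main subtlety is that $f$ might stay bounded; Bernstein functions can be bounded, e.g.\ when $b=0$ and $\nu$ is finite. The key observation is that $f(\lambda_n)\le f(\infty):=\lim_{\lambda\to\infty} f(\lambda)\in(0,\infty]$ anyway.

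If $f(\infty)<\infty$, then $f(\lambda_{n_k})\in[c,f(\infty)]$. The blow-up of the ratio then forces $R_{n_k}\to\infty$, while $t f(\lambda_{n_k})$ stays bounded. Hence $R_{n_k}-tf(\lambda_{n_k})\to\infty$.

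If $f(\infty)=\infty$, then for $k$ large we have $R_{n_k}/f(\lambda_{n_k})-t\ge 1$. The exponent is therefore at least $f(\lambda_{n_k})\to\infty$.

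In both cases, letting $k\to\infty$ in the lemma's bound gives $\|S_t^{\,f}\|_{p\to q}=+\infty$ for every $t>0$.

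For (ii), fix $t\in(0,\Theta_{p,q,f})$ and set $\varepsilon:=(\Theta_{p,q,f}-t)/2>0$. By the definition of $\limsup$ there are infinitely many $n$ with $R_n/f(\lambda_n)\ge t+\varepsilon$. Along these $n$ the exponent is at least $\varepsilon f(\lambda_n)$. This tends to $+\infty$ because $\lambda_n\to\infty$ and $f(\lambda)\to\infty$ by assumption. The lemma then again gives an infinite operator norm.

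There is no real obstacle here. The only points requiring care are keeping $f(\lambda_n)$ away from $0$ (so the ratio is meaningful and the sign argument works) and handling bounded $f$ in part (i). This case distinction explains why part (ii) needs the extra hypothesis $f(\lambda)\to\infty$ while part (i) does not.
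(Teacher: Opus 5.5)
Your proof is correct and follows the paper's route: apply Lemma~\ref{lem:unified-eigen-test} along a subsequence realizing the $\limsup$, using $\liminf_{\lambda\to\infty} f(\lambda)>0$ in (i) and $f(\lambda)\to\infty$ in (ii). In (i) your case split on whether $f$ is bounded is harmless but unnecessary, since $f(\lambda_{n_k})\ge c>0$ gives, for large $k$, $f(\lambda_{n_k})\bigl(R_{n_k}/f(\lambda_{n_k})-t\bigr)\ge c\bigl(R_{n_k}/f(\lambda_{n_k})-t\bigr)\to+\infty$ directly; the paper does essentially this by choosing $n_k$ with $R_{n_k}\ge (t+k)f(\lambda_{n_k})$.
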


\begin{proof}
If $\Theta_{p,q,f} = +\infty$, then for each fixed $t>0$ there exists a subsequence $n_k\to\infty$ such that
$$\log \big(\| \varphi_{n_k} \|_{L^q(\mu)}/ \| \varphi_{n_k} \|_{L^p(\mu)} \big)\ge (t+k)f(\lambda_{n_k}).$$
Hence
$e^{-t f(\lambda_{n_k})} \frac{\| \varphi_{n_k} \|_{L^q(\mu)}}{\| \varphi_{n_k} \|_{L^p(\mu)}} \ge e^{k f(\lambda_{n_k})}\to + \infty$ as $k \to \infty$, since $\liminf_{\lambda \to \infty}f(\lambda) > 0$. If $\Theta_{p, q, f} \in (0, \infty)$ and $t \in (0, \Theta_{p, q, f})$, choose $\varepsilon > 0$ with $t + \varepsilon < \Theta_{p, q, f}$ and take a subsequence $n_k \to\infty$ such that
$$\log \big(\| \varphi_{n_k} \|_{L^q(\mu)}/ \| \varphi_{n_k} \|_{L^p(\mu)} \big)\ge (t + \varepsilon)f(\lambda_{n_k}).$$
Hence $e^{-t f(\lambda_{n_k})} \frac{\| \varphi_{n_k} \|_{L^q(\mu)}}{\| \varphi_{n_k} \|_{L^p(\mu)}} \ge e^{\varepsilon f(\lambda_{n_k})}\to + \infty$ as $k \to \infty$. The conclusion then follows from Lemma~\ref{lem:unified-eigen-test}.
\end{proof}

Taking $f(\lambda)=\sqrt{\lambda}$ in Theorem~\ref{thm:SW-spectral-obstruction}, we obtain the following $L^p\to L^q$ blow-up criterion for the Poisson semigroup.

\begin{corollary} \label{cor:SW-spectral-obstruction-II}
Assume the setting of Lemma~\ref{lem:unified-eigen-test} and let $P_t: = e^{-t(-L)^{1/2}}$. Set
\[
\kappa_{p, q} : = \limsup_{n \to \infty} \frac{\log \big(\| \varphi_n \|_{L^q(\mu)}/ \| \varphi_n \|_{L^p(\mu)} \big)}{\sqrt{\lambda_n}}.
\]
If $\kappa_{p,q}=+\infty$, then $\|  P_t \|_{p \to q} = +\infty$ for any $t > 0$.
\end{corollary}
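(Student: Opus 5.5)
This is a direct specialization of Theorem~\ref{thm:SW-spectral-obstruction}(i) to the Bernstein function $f(\lambda)=\sqrt{\lambda}$. The plan has three short steps.

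First, we check that $f(\lambda)=\sqrt{\lambda}$ is an admissible Bernstein function. It is smooth on $(0,\infty)$, and its derivatives $f^{(k+1)}(\lambda)=c_k\lambda^{1/2-k-1}$ alternate in sign exactly as the definition requires. Explicitly, it has L\'evy--Khintchine triplet $a=b=0$ and
$$
d\nu(s)=\frac{1}{2\sqrt{\pi}}\,s^{-3/2}\,ds,
$$
which satisfies $\int(1\wedge s)\,d\nu<\infty$. Moreover $f\not\equiv 0$ and $f(\lambda)\to+\infty$. By functional calculus, $e^{-tf(-L)}=e^{-t(-L)^{1/2}}=P_t$, so $S_t^{\,f}=P_t$ and $P_t\varphi_n=e^{-t\sqrt{\lambda_n}}\varphi_n$.

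Second, we identify the constants. With this choice of $f$ we have $f(\lambda_n)=\sqrt{\lambda_n}$, so the quantity $\Theta_{p,q,f}$ from Theorem~\ref{thm:SW-spectral-obstruction} coincides exactly with $\kappa_{p,q}$. There is one small point of care: for finitely many $n$ we may have $\lambda_n=0$, for instance the constant eigenfunction. Since the limsup is taken as $n\to\infty$ and $\lambda_n\to\infty$, these finitely many terms are irrelevant, and both quantities are well defined for all large $n$.

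Third, we conclude. If $\kappa_{p,q}=+\infty$, then $\Theta_{p,q,f}=+\infty$, and part (i) of Theorem~\ref{thm:SW-spectral-obstruction} gives $\|P_t\|_{p\to q}=+\infty$ for every $t>0$.

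If one prefers to avoid citing the theorem, the same conclusion follows directly from Lemma~\ref{lem:unified-eigen-test}. Fix $t>0$ and choose $n_k\to\infty$ such that the log-ratio is at least $(t+k)\sqrt{\lambda_{n_k}}$. Then
$$
e^{-t\sqrt{\lambda_{n_k}}}\,\frac{\|\varphi_{n_k}\|_{L^q(\mu)}}{\|\varphi_{n_k}\|_{L^p(\mu)}}\ \ge\ e^{k\sqrt{\lambda_{n_k}}}\ \longrightarrow\ +\infty.
$$

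There is no real obstacle here. The only point needing verification is that $\sqrt{\lambda}$ is a Bernstein function, which is classical.
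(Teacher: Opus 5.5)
Your proposal is correct and is essentially the paper's argument: the paper obtains the corollary simply by taking $f(\lambda)=\sqrt{\lambda}$ in Theorem~\ref{thm:SW-spectral-obstruction}(i), so that $S_t^{\,f}=P_t$ and $\Theta_{p,q,f}=\kappa_{p,q}$. Your check of the L\'evy--Khintchine representation of $\sqrt{\lambda}$ and your remark that the finitely many indices with $\lambda_n=0$ do not affect the $\limsup$ are correct additions.
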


\subsection{Functional inequalities under subordination} \label{sec:SW-relation}

In this subsection we collect the super- and weak Poincar\'e inequalities transferred in \cite{SW} under Bernstein subordination. These inequalities will be used later in Section~\ref{sec:jacobi-case}.

Let $(T_s)_{s\ge 0}$ be a symmetric Markov semigroup with generator $L$ and invariant probability measure $\mu$. Assume that $L$ is self-adjoint on $L^2(\mu)$. Let $f$ be a Bernstein function, and let $S_t^{\, f} : = e^{-t f(-L)}$ denote the subordinated semigroup. Let $\Phi:L^2(\mu)\to[0,\infty]$ be a functional on $L^2(\mu)$, which is homogeneous of degree $2$ and monotone along the semigroup, i.e., for all $c\in \mathbb R$, $s > 0$ and $u \in L^2(\mu)$,
$$
\Phi(cu)=c^2\Phi(u),\qquad \Phi(T_su)\le \Phi(u).
$$
A typical example is $\Phi(u) = \|u\|_{L^1(\mu)}^2$.

\begin{proposition}[\text{\cite[Proposition~9]{SW}}]\label{prop:SW-superP}
Assume that $L$ satisfies the super-Poincar\'e inequality
$$
\|u\|_{L^2(\mu)}^2
\le r\,\langle (-L) u,u\rangle_{L^2(\mu)}+\beta(r)\,\Phi(u),
\qquad r>0,\ u\in D(L),
$$
for a decreasing function $\beta:(0,\infty)\to(0,\infty]$.
Let $f$ be a Bernstein function, then $f(-L)$ satisfies
$$
\|u\|_{L^2(\mu)}^2
\le r\,\langle f(-L)u,u\rangle_{L^2(\mu)}+\beta_f(r)\,\Phi(u),
\qquad r>0,\ u\in D(f(-L)),
$$
with the transformed function
$$
\beta_f(r) : = 4\,\beta\!\left(\frac{1}{2\,f^{-1}(2/r)}\right).
$$
\end{proposition}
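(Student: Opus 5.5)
I would argue directly through the spectral theorem, in the spirit of \cite{SW}. The idea is to split $u$ into a smoothed part $T_s u$, to which the original super-Poincar\'e inequality is applied, and a remainder $u-T_su$, which is controlled by the subordinated form $\langle f(-L)u,u\rangle$. At the end I tune $s$ in terms of $r$.

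Let $E$ be the spectral resolution of $-L$ and write $d\mu_u(\lambda)=d\langle E_\lambda u,u\rangle$. For $u\in D(f(-L))$ and $s>0$ we have the exact splitting
$$
\|u\|_{L^2(\mu)}^2=\|T_su\|_{L^2(\mu)}^2+\int_{[0,\infty)}\bigl(1-e^{-2s\lambda}\bigr)\,d\mu_u(\lambda).
$$

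\textbf{Step 1 (elementary bound from the Bernstein structure).} I will show that for every $\tau>0$ and $\lambda\ge0$,
$$
1-e^{-\lambda/\tau}\le \frac{f(\lambda)}{f(1/\tau)}.
$$
\begin{itemize}
\item For $\lambda\ge 1/\tau$: the left side is at most $1$, and $f(\lambda)\ge f(1/\tau)$ because $f$ is increasing.
\item For $\lambda<1/\tau$: we have $1-e^{-\lambda/\tau}\le \lambda/\tau$. Since $f$ is concave with $f(0)\ge0$, also $f(\lambda)\ge \tau\lambda\, f(1/\tau)$.
\end{itemize}
Taking $\tau=1/(2s)$ gives
$$
\int\bigl(1-e^{-2s\lambda}\bigr)\,d\mu_u\le \frac{\langle f(-L)u,u\rangle}{f(1/(2s))}.
$$

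\textbf{Step 2 (smoothed part).} Apply the hypothesis to $T_su\in D(L)$ with parameter $r'$, and use $\Phi(T_su)\le\Phi(u)$:
$$
\|T_su\|^2\le r'\int\lambda e^{-2s\lambda}\,d\mu_u+\beta(r')\Phi(u).
$$
The inequality $xe^{-x}\le 1-e^{-x}$ (equivalent to $e^x\ge 1+x$), applied with $x=2s\lambda$, gives
$$
\lambda e^{-2s\lambda}\le \frac{1-e^{-2s\lambda}}{2s}.
$$
Choosing $r'=2s$ and using Step 1, the first term is bounded by $\langle f(-L)u,u\rangle/f(1/(2s))$.

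\textbf{Step 3 (combine and choose $s$).} Adding the two pieces gives
$$
\|u\|^2\le \frac{2}{f(1/(2s))}\langle f(-L)u,u\rangle+\beta(2s)\Phi(u).
$$
Now choose $s$ so that $f(1/(2s))=2/r$, that is, $2s=1/f^{-1}(2/r)$. This yields the super-Poincar\'e inequality for $f(-L)$ with function
$$
\beta\bigl(1/f^{-1}(2/r)\bigr).
$$
Since $\beta$ is decreasing and $1/f^{-1}(2/r)\ge 1/(2f^{-1}(2/r))$, this function is at most
$$
\beta\!\left(\frac{1}{2f^{-1}(2/r)}\right)\le\beta_f(r).
$$
So the stated inequality follows, in fact with room to spare in the constants.

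\textbf{Main obstacle.} The only delicate point is the meaning of $f^{-1}$, since $f$ need not be strictly increasing or unbounded. I would use the generalized inverse $f^{-1}(y)=\inf\{\lambda: f(\lambda)\ge y\}$ and check that $f(f^{-1}(y))\ge y$ by continuity, which is all Step 3 needs. For $2/r\ge\sup f$, where no such $\lambda$ exists, I would set $\beta_f(r)=+\infty$, so the inequality is trivial there; this is consistent with $\beta$ being allowed to take the value $+\infty$.
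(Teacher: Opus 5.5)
The paper gives no proof of this proposition. It is quoted from \cite[Proposition~9]{SW} and used as a black box, so your argument should be judged as a self-contained replacement. As such it is correct, and it takes a genuinely different route from the paper, which simply cites. The splitting together with Step~2 (with $r'=2s$) already gives the $f$-independent estimate
\[
\|u\|_{L^2(\mu)}^2\le 2\int_{[0,\infty)}\bigl(1-e^{-2s\lambda}\bigr)\,d\mu_u(\lambda)+\beta(2s)\,\Phi(u),\qquad s>0.
\]
This uses only four ingredients: self-adjointness, the hypothesis applied to $T_su\in D(L)$, the bound $\Phi(T_su)\le\Phi(u)$, and $xe^{-x}\le 1-e^{-x}$. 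The Bernstein function then enters only through the scalar comparison $1-e^{-2s\lambda}\le f(\lambda)/f(1/(2s))$, that is, through monotonicity and concavity of $f$. You never need the subordinating measures or the L\'evy--Khintchine representation. Calibrating $f(1/(2s))=2/r$ gives the rate $\beta\bigl(1/f^{-1}(2/r)\bigr)$. This is at most the quoted $\beta_f(r)=4\beta\bigl(1/(2f^{-1}(2/r))\bigr)$, because $\beta$ is positive and decreasing, so you actually prove a slightly stronger statement. Quoting \cite{SW} buys the paper only brevity.

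There is one slip to fix. Step~1 as stated, $1-e^{-\lambda/\tau}\le f(\lambda)/f(1/\tau)$, is false. In the case $\lambda<1/\tau$ your two bullet bounds are $\lambda/\tau$ and $\tau\lambda$, and these do not compare when $\tau<1$ (for example $f(\lambda)=\lambda$, $\tau=1/2$, $\lambda\to0$). What your case split actually proves is
\[
1-e^{-\sigma\lambda}\le\frac{f(\lambda)}{f(1/\sigma)},\qquad \sigma>0,\ \lambda\ge0.
\]
It must be applied with $\sigma=2s$, not $\tau=1/(2s)$; substituting the latter into your formula would put $f(2s)$ in the denominator. The consequence you display and use afterwards is the correct one, so nothing downstream changes. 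Note also that $f(1/(2s))>0$ because $f\not\equiv0$.

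You discuss the range $2/r\ge\sup f$, but not the range $2/r\le f(0)=a$, which can occur when $a>0$. There $f^{-1}(2/r)=0$ and no admissible $s$ exists. The claim is trivial in that range, since $r\langle f(-L)u,u\rangle_{L^2(\mu)}\ge r f(0)\|u\|_{L^2(\mu)}^2\ge 2\|u\|_{L^2(\mu)}^2$.
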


\begin{proposition}[\text{\cite[Proposition~10]{SW}}]\label{prop:SW-weakP}
Assume that $L$ satisfies the weak Poincar\'e inequality
$$
\|u\|_{L^2(\mu)}^2
\le \alpha(r)\,\langle (-L) u,u\rangle_{L^2(\mu)}+r\,\Phi(u),
\qquad r>0,\ u\in D(L),
$$
for a decreasing function $\alpha:(0,\infty)\to(0,\infty)$.
Let $f$ be a Bernstein function, then $f(-L)$ satisfies
$$
\|u\|_{L^2(\mu)}^2
\le \alpha_f(r)\,\langle f(-L)u,u\rangle_{L^2(\mu)}+r\,\Phi(u),
\qquad r>0,\ u\in D(f(-L)),
$$
with the transformed function
$$
\alpha_f(r) := \frac{2}{\,f\!\left(\frac{1}{2\,\alpha(r/4)}\right)}.
$$
\end{proposition}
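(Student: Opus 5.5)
The statement is \cite[Proposition~10]{SW}. I would prove it from the spectral theorem for $-L$, combined with a splitting of $u$ along the semigroup. A naive split of $u$ by spectral projections is not allowed: $\Phi$ is only assumed to decrease along $T_s$, not under spectral projections. So the low/high frequency cut has to be done with $T_s$ itself.

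Let $E$ be the spectral resolution of $-L$ and write $d\mu_u(\lambda)=d\langle E_\lambda u,u\rangle$, so that
$$\|u\|^2=\int d\mu_u,\qquad \langle f(-L)u,u\rangle=\int f\,d\mu_u.$$
Fix $r>0$ and set $\alpha:=\alpha(r/4)$ and $\lambda_0:=1/(2\alpha)$.

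For $s>0$, decompose $u=T_su+(u-T_su)$, so that $\|u\|^2\le 2\|T_su\|^2+2\|u-T_su\|^2$. Apply the weak Poincar\'e inequality for $L$ with parameter $r/4$ to $T_su$, and use $\Phi(T_su)\le\Phi(u)$. This gives
$$\|u\|^2\le \int g_s(\lambda)\,d\mu_u(\lambda)+\tfrac r2\,\Phi(u),\qquad g_s(\lambda):=2\alpha\lambda e^{-2s\lambda}+2(1-e^{-s\lambda})^2 .$$

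The core step is a pointwise bound
$$g_s(\lambda)\le \tfrac12+\frac{f(\lambda)}{f(\lambda_0)}\qquad\text{for all }\lambda\ge0,$$
for a suitable choice of $s$ (roughly $s\simeq\alpha$). Granting it, we get $\|u\|^2\le\tfrac12\|u\|^2+f(\lambda_0)^{-1}\langle f(-L)u,u\rangle+\tfrac r2\Phi(u)$. Absorbing the $\tfrac12\|u\|^2$ term yields
$$\|u\|^2\le \frac{2}{f(\lambda_0)}\langle f(-L)u,u\rangle+r\,\Phi(u),$$
which is exactly the claim with $\alpha_f(r)=2/f\bigl(1/(2\alpha(r/4))\bigr)$. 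The case $\Phi(u)=\infty$ is trivial, and the domain issues are handled by density of $D(L)$ in $D(f(-L))$.

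To prove the pointwise bound I would use two facts about $f$. First, a Bernstein function is nondecreasing, so $f(\lambda)\ge f(\lambda_0)$ for $\lambda\ge\lambda_0$. Second, $f$ is concave with $f(0)\ge0$, so $f(\lambda)\ge (\lambda/\lambda_0)f(\lambda_0)$ for $\lambda\le\lambda_0$.

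On the high range $\lambda\ge\lambda_0$, the term $2(1-e^{-s\lambda})^2\le 2$ needs to be handled. The first term satisfies $2\alpha\lambda e^{-2s\lambda}\le \alpha/(es)$, which is small once $s\gtrsim\alpha$.

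On the low range $\lambda\le\lambda_0$, I would use $\alpha\lambda\le\tfrac12$ together with $(1-e^{-s\lambda})^2\le s\lambda$. The latter is at most $s\lambda_0\,f(\lambda)/f(\lambda_0)$.

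The main obstacle is this constant bookkeeping. A single choice of $s$ must make both ranges work with exactly the constants $\tfrac12$ and $1/f(\lambda_0)$. On the high range, $2(1-e^{-s\lambda})^2$ can reach $2$ and so exceed $f(\lambda)/f(\lambda_0)\ge1$ by a factor $2$. If this does not close directly, I would first replace the crude split by the sharper $\|u\|^2\le(1+\varepsilon)\|T_su\|^2+(1+\varepsilon^{-1})\|u-T_su\|^2$. Alternatively I would adjust which of $T_s$ or $T_{s/2}$ enters the weak Poincar\'e step. In the worst case I would accept a harmless numerical change in the constants, which does not affect the way the proposition is used in Section~\ref{sec:jacobi-case}.
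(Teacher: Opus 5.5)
The paper gives no proof of this proposition; it imports it from \cite[Proposition~10]{SW}, so your argument has to stand on its own. Your first step is fine. Applying the weak Poincar\'e inequality at level $r/4$ to $T_su\in D(L)$ and using $\Phi(T_su)\le\Phi(u)$ gives $\|u\|^2\le\int g_s\,d\mu_u+\frac r2\Phi(u)$. No density argument is needed, since $T_su\in D(L)$ for every $u$ and $s>0$.

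The gap is your ``core step'': the pointwise bound $g_s(\lambda)\le\frac12+f(\lambda)/f(\lambda_0)$ is false in general, for every choice of $s$. For each fixed $s>0$ one has $g_s(\lambda)\to2$ as $\lambda\to\infty$. For a Bernstein function that is flat above $\lambda_0$, the right-hand side stays strictly below $2$:
\begin{itemize}
\item for $f\equiv1$ it equals $\frac32$;
\item for $f(\lambda)=1-e^{-\lambda}$ it is at most $\frac12+(1-e^{-\lambda_0})^{-1}<2$ as soon as $\lambda_0>\log 3$.
\end{itemize}
Since $-L$ generally has unbounded spectrum, the bound is needed exactly where it fails. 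You noticed this obstruction but left it open. Accepting worse constants would not prove the statement as written. The $(1+\varepsilon)$-split does not rescue the stated constants either: getting $1+\varepsilon^{-1}\le\frac32$ needs $\varepsilon\ge2$. The $\Phi$-term after absorbing $\frac12\|u\|^2$ is then at least $\frac32r$, if the weak Poincar\'e inequality is used at level $r/4$.

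The culprit is the absorption of $\frac12\|u\|^2$, not the crude split. Aim directly at $2f(\lambda)/f(\lambda_0)$ instead. This is $\ge2$ on $[\lambda_0,\infty)$, so it matches the limit of $g_s$.

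Take $s=\alpha=\alpha(r/4)$ and put $v=\alpha\lambda$, so that $g_s=2ve^{-2v}+2(1-e^{-v})^2$.
\begin{itemize}
\item Expanding, $g_s\le2$ is equivalent to $(1+v)e^{-v}\le2$, which holds since $(1+v)e^{-v}\le1$.
\item Also $g_s\le2v+2(1-e^{-v})\le4v$.
\end{itemize}
Hence
\[
g_s(\lambda)\le 2\min\{1,2\alpha\lambda\}=2\min\{1,\lambda/\lambda_0\}\le \frac{2f(\lambda)}{f(\lambda_0)}.
\]
The last inequality uses precisely your two facts: $f$ is monotone, and $f$ is concave with $f(0)\ge0$.

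Integrating against $\mu_u$ gives $\|u\|^2\le\frac{2}{f(\lambda_0)}\langle f(-L)u,u\rangle+\frac r2\Phi(u)$. This is the proposition, even with $\frac r2$ in place of $r$. With this change your semigroup-splitting argument is a complete, self-contained proof.
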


\begin{remark}
If $f(\lambda)=\sqrt{\lambda}$, then $f^{-1}(u)=u^2$, so
$$
\beta_{1/2}(r)=4\,\beta\!\left(\frac{r^2}{8}\right),
\qquad
\alpha_{1/2}(r)=2\sqrt{2\,\alpha(r/4)}.
$$
Thus Poisson subordination reshapes the associated rate functions by the deformation $r\mapsto r^2$
(super-Poincar\'e) and a square-root renormalization (weak Poincar\'e).
\end{remark}

By choosing suitable rate functions and functionals $\Phi$, the super- and weak Poincar\'e inequalities above yield a variety of useful consequences, including ultracontractivity and concentration inequalities; see, e.g., \cite{W05}.

\section{Proof of Theorem \ref{thm:intro-main1}} \label{sec:proof-of-1}
\subsection{Ornstein--Uhlenbeck and Laguerre: blow-up} \label{sec:blow-up}

In this section we prove the first two assertions. We start with the Poisson--Hermite semigroup. To this end, we establish the following $L^q$--estimate for the Hermite polynomials $(h_n)_{n \ge 0}$.

\begin{proposition}\label{prop:Hermite-exp-growth}
Fix $q>2$. Then for every $n\ge 1$,
\begin{equation} \label{eq:upper-lower-hermite}
\frac{(q-1)^{n/2}}{(2\pi n)^{1/4}}\exp\!\Bigl(-\frac{1}{24n}\Bigr)
\le
\|h_n\|_{L^q(\gamma_1)}
\le
(q-1)^{n/2}.
\end{equation}
Consequently, for $2 \le p < q$,
\begin{equation} \label{eq:ratio-hermite-I}
\lim_{n \to \infty} \frac{\log\big(\| h_n \|_{L^q(\gamma_1)}/ \| h_n \|_{L^p(\gamma_1)}  \big)}{n} = \frac12 \log\bigg( \frac{q-1}{p-1} \bigg);
\end{equation}
and for $1 < p < 2$,
\begin{equation} \label{eq:ratio-hermite-II}
\liminf_{n \to \infty} \frac{\log\big(\| h_n \|_{L^q(\gamma_1)}/ \| h_n \|_{L^p(\gamma_1)}  \big)}{n} \ge \frac12 \log(q-1).
\end{equation}
\end{proposition}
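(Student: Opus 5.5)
The plan is to get the upper bound in \eqref{eq:upper-lower-hermite} from the Nelson--Gross estimate \eqref{eq:intro-hypercontractivity}, the lower bound from a duality test against the exponential generating function \eqref{eq:OU-generating}, and then read off the two ratio statements.

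\emph{Upper bound.} Apply \eqref{eq:intro-hypercontractivity} with $d=1$ and $p=2$, choosing $s>0$ so that $q=1+e^{2s}$, that is, $e^{-s}=(q-1)^{-1/2}$. Since $T_s h_n=e^{-ns}h_n$, we get
$$e^{-ns}\|h_n\|_{L^q(\gamma_1)}\le \|h_n\|_{L^2(\gamma_1)}=1,$$
which is exactly $\|h_n\|_{L^q(\gamma_1)}\le (q-1)^{n/2}$. The only thing to check here is that the eigenvalue of $h_n$ is $n$ in the normalization $L_{\rm OU}=\frac12\Delta-x\,\partial_x$ with $d\gamma_1=\pi^{-1/2}e^{-x^2}dx$. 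It is, as recorded in Section~\ref{sec:facts}.

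\emph{Lower bound.} Let $r=q'$ be the conjugate exponent, so that $r-1=1/(q-1)$. For $\tau>0$ set $g_\tau(x)=e^{2x\tau-\tau^2}$.
\begin{itemize}
\item The Gaussian integral gives $\int e^{r(2x\tau-\tau^2)}\,d\gamma_1=e^{(r^2-r)\tau^2}$, hence $\|g_\tau\|_{L^r(\gamma_1)}=e^{(r-1)\tau^2}$.
\item From \eqref{eq:OU-generating} and $H_n=\sqrt{2^n n!}\,h_n$ (which follows from $\|H_n\|_{L^2(\gamma_1)}^2=2^n n!$), orthogonality gives $\langle h_n,g_\tau\rangle_{\gamma_1}=\tau^n\sqrt{2^n/n!}$.
\end{itemize}
Hölder's inequality then yields
$$\|h_n\|_{L^q(\gamma_1)}\ge \frac{\tau^n 2^{n/2}}{\sqrt{n!}}\,e^{-(r-1)\tau^2}.$$
Optimizing at $\tau^2=n/(2(r-1))=n(q-1)/2$ turns the right-hand side into
$$(q-1)^{n/2}\,\frac{(n/e)^{n/2}}{\sqrt{n!}}.$$
Finally, the Stirling upper bound $n!\le\sqrt{2\pi n}\,(n/e)^n e^{1/(12n)}$ gives precisely the left inequality in \eqref{eq:upper-lower-hermite}.

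\emph{Consequences.}
\begin{itemize}
\item Taking logarithms in \eqref{eq:upper-lower-hermite} and dividing by $n$ gives $\frac1n\log\|h_n\|_{L^q(\gamma_1)}\to\frac12\log(q-1)$, because the factors $(2\pi n)^{-1/4}$ and $e^{-1/(24n)}$ are subexponential.
\item For $2<p<q$, the same two-sided bound applied with $p$ in place of $q$ gives \eqref{eq:ratio-hermite-I}.
\item For $p=2$, we have $\|h_n\|_{L^2(\gamma_1)}=1$, and the formula \eqref{eq:ratio-hermite-I} holds trivially since $\log(p-1)=0$.
\item For $1<p<2$, $\gamma_1$ is a probability measure, so $\|h_n\|_{L^p(\gamma_1)}\le\|h_n\|_{L^2(\gamma_1)}=1$. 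Combined with the lower bound, this gives \eqref{eq:ratio-hermite-II}.
\end{itemize}

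The main obstacle is bookkeeping rather than ideas: the constants have to line up so that the optimization produces exactly the stated factor $(2\pi n)^{-1/4}e^{-1/(24n)}$. This requires checking the Hermite normalization $H_n=\sqrt{2^n n!}\,h_n$ against the weight $e^{-x^2}$, and the eigenvalue normalization of $L_{\rm OU}$ used in the upper bound.
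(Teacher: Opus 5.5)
Your proposal is correct and follows the paper's proof essentially step for step. The upper bound comes from hypercontractivity at $s=\frac12\log(q-1)$. The lower bound comes from duality against exponential test functions: your $e^{2x\tau-\tau^2}$ is the paper's $e^{bx}$ with $b=2\tau$, differing only in normalization, and the optimization at $\tau^2=n(q-1)/2$ followed by Robbins' Stirling bound is the same. The two ratio statements are then deduced the same way as in the paper.
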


\begin{proof}
We first prove \eqref{eq:upper-lower-hermite}. Write $q'=\frac{q}{q-1}$. By duality,
\begin{equation} \label{eq:dual-hermite}
\|h_n\|_{L^q(\gamma_1)} = \sup_{\|g\|_{L^{q'}(\gamma_1)}=1} \big| \langle h_n  ,   g  \rangle_{\gamma_1} \big|.
\end{equation}
For $b>0$, we define
$$
g_b(x):=\exp\!\Bigl(bx-\frac{q'b^2}{4}\Bigr).
$$
Since $\int_{\mathbb R}e^{sx}\, d\gamma_1(x)=e^{s^2/4}$, we have $\|g_b\|_{L^{q'}(\gamma_1)}=  1$.

Recall that $h_n=(2^n n!)^{-1/2}H_n$.  Integrating the generating function \eqref{eq:OU-generating} against $e^{bx}\, d \gamma_1$ and comparing coefficients gives
\begin{equation} \label{eq:fourier-expansion-OU}
\int_{\mathbb R} h_n(x)\,e^{bx}\, d\gamma_1(x) = \frac{b^n}{(2^n n!)^{1/2}}\,e^{b^2/4}.
\end{equation}
Therefore, by \eqref{eq:dual-hermite},
\begin{align*}
\|h_n\|_{L^q(\gamma_1)} &\ge \big| \langle h_n  ,   g_b  \rangle_{\gamma_1} \big| = \frac{b^n}{(2^n n!)^{1/2}} \exp\!\Bigl(\frac{b^2}{4}-\frac{q'b^2}{4}\Bigr)\\
& = \frac{b^n}{(2^n n!)^{1/2}}
\exp\!\Bigl(-\frac{b^2}{4(q-1)}\Bigr).
\end{align*}
Optimizing in $b$ gives $b=\sqrt{2n(q-1)}$, hence
$$
\|h_n\|_{L^q(\gamma_1)}
\ge
\frac{(q-1)^{n/2}\,n^{n/2}\, e^{-n/2}}{\sqrt{n!}}.
$$
Using Robbins' refinement of Stirling's formula \cite{Robbins},
$n!\le \sqrt{2\pi}\,n^{n+\frac12} e^{-n} e^{1/(12n)}$, we obtain the stated lower bound.

For the upper bound, set $s:=\frac12\log(q-1)>0$. By hypercontractive estimate \eqref{eq:intro-hypercontractivity} one has $\big\| e^{s L_{\rm OU}} \big\|_{2 \to q}\le 1$. Since $e^{s L_{\rm OU}} h_n = e^{-sn} h_n$ and $\|h_n\|_{L^2(\gamma_1)}=1$,
$$
\|h_n\|_{L^q(\gamma_1)}
= e^{sn} \,\| e^{s L_{\rm OU}} h_n \|_{L^q(\gamma_1)}
\le e^{sn} =(q-1)^{n/2}.
$$

We now turn to the exponential-rate statements. For $2\le p<q$, applying \eqref{eq:upper-lower-hermite} with $q$ replaced by $p$ (and noting that $\|h_n\|_{L^2(\gamma_1)}=1$) yields \eqref{eq:ratio-hermite-I}. If $1<p<2$, then $\|h_n\|_{L^p(\gamma_1)}\le \|h_n\|_{L^2(\gamma_1)}=1$, so
\[
\liminf_{n \to \infty} \frac{\log\big(\| h_n \|_{L^q(\gamma_1)}/ \| h_n \|_{L^p(\gamma_1)}  \big)}{n} \ge \lim_{n \to \infty} \frac{\log\big(\| h_n \|_{L^q(\gamma_1)}  \big)}{n} = \frac12\log(q-1),
\]
where the last identity follows from \eqref{eq:upper-lower-hermite}. This completes the proof.
\end{proof}

\begin{remark}
By \cite[Theorem~2.1]{L02}, for any $1 \le q < \infty$ with $q \ne 2$ one has, as $n \to \infty$,
$$
\| h_n \|_{L^q(\gamma_1)} = \frac{c_q}{n^{1/4}} \max\{ (q-1)^{n/2} \,, 1\}  \left( 1 + O\!\left( \frac{1}{n} \right) \right).
$$
This shows that the lower bound in \eqref{eq:upper-lower-hermite} is sharp up to constants; moreover, \eqref{eq:ratio-hermite-II} holds with $\liminf$ replaced by $\lim$, and equals $\frac12\log(q-1)$.
\end{remark}

We can now combine Corollary~\ref{cor:SW-spectral-obstruction-II} with Proposition~\ref{prop:Hermite-exp-growth} to conclude that, for any $1 < p < q <\infty$ and $t>0$, the Poisson--Hermite semigroup is not $L^p \to L^q$ bounded. We first treat the case $q > 2$. In dimension one, \eqref{eq:ratio-hermite-I} and \eqref{eq:ratio-hermite-II} immediately imply that
\[
\kappa_{p, q} = \limsup_{n \to \infty} \frac{\log \big( \| h_n \|_{L^q(\gamma_1)}/ \| h_n \|_{L^p(\gamma_1)} \big)}{\sqrt{n}} = +\infty.
\]
Hence Corollary~\ref{cor:SW-spectral-obstruction-II} yields $\| P_t \|_{p \to q} = +\infty$. The $d$-dimensional case follows by tensorization. Indeed, for $n \ge 1$ define $F_n(x) := h_n(x_1)$ on $\mathbb R^d$. Then $\| F_n \|_{L^r(\gamma_d)} = \| h_n \|_{L^r(\gamma_1)}$ for $r \in \{p, q\}$, and $L_{\rm OU} F_n = -n F_n$. Consequently, the higher-dimensional case follows from Lemma~\ref{lem:unified-eigen-test} and the one-dimensional case. It remains to consider the case $q\le2$. Since $P_t$ is self-adjoint on $L^2(\gamma_d)$, duality gives
\[
\|P_t\|_{p\to q}=\|P_t\|_{q'\to p'},
\]
where $p' = \frac{p}{p-1}$ and $q' = \frac{q}{q-1}$. If $p< q \le 2$, then $p<2$ and hence $p'>2$. Applying the previous argument to  $(q',p')$,
we obtain $\|P_t\|_{q'\to p'}=+\infty$, and the proof is complete.

We now turn to the Poisson--Laguerre semigroup. In analogy with Proposition~\ref{prop:Hermite-exp-growth}, we first prove the following $L^q$-estimate for the Laguerre polynomials $(\ell_n^\alpha)_{n \ge 0}$.

\begin{proposition}\label{prop:Laguerre-exp-growth}
Fix $q > 2$ and $\alpha > -1$. For every $\rho \in (0, q -1)$ there exists a constant $c_{\alpha, q, \rho} > 0$ such that for every $n \ge 1$,
\begin{equation} \label{eq:lower-laguerre}
\| \ell_n^\alpha \|_{L^q(\mu_\alpha)} \geq c_{\alpha, q, \rho}\, \rho^n n^{\alpha/2}.
\end{equation}
Consequently, for $1 < p \le 2$,
\begin{equation} \label{eq:ratio-laguerre-I}
\liminf_{n \to \infty} \frac{\log \big( \| \ell_n^\alpha \|_{L^q(\mu_\alpha)}/ \| \ell_n^\alpha \|_{L^p(\mu_\alpha)} \big)}{n} \ge \log(q-1).
\end{equation}
If moreover $\alpha \ge -1/2$, then for every $n \ge 1$,
\begin{equation} \label{eq:upper-laguerre}
\| \ell_n^\alpha \|_{L^q(\mu_\alpha)} \le (q-1)^n.
\end{equation}
Consequently, if $\alpha \ge -1/2$, then for $2 \le p < q$,
\begin{equation} \label{eq:ratio-laguerre-II}
\lim_{n \to \infty} \frac{\log \big( \| \ell_n^\alpha \|_{L^q(\mu_\alpha)}/ \| \ell_n^\alpha \|_{L^p(\mu_\alpha)} \big)}{n}
= \log\bigg( \frac{q-1}{p-1} \bigg).
\end{equation}
\end{proposition}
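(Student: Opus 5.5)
The plan follows the proof of Proposition~\ref{prop:Hermite-exp-growth}: a duality lower bound tested against exponentials, and a hypercontractivity upper bound. Write $q'=q/(q-1)$. By duality, $\|\ell_n^\alpha\|_{L^q(\mu_\alpha)}\ge |\langle \ell_n^\alpha, g\rangle_{\mu_\alpha}|/\|g\|_{L^{q'}(\mu_\alpha)}$ for every $g\in L^{q'}(\mu_\alpha)$. I would take $g_\sigma(x)=e^{\sigma x}$ with $0<\sigma<1/q'$. Since $\int_0^\infty e^{cx}\,d\mu_\alpha(x)=(1-c)^{-\alpha-1}$ for $c<1$, we get
\[
\|g_\sigma\|_{L^{q'}(\mu_\alpha)}=(1-q'\sigma)^{-(\alpha+1)/q'}.
\]

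Next I compute the Laguerre coefficients of $e^{\sigma x}$. Integrate the generating function \eqref{eq:Laguerre-generating} against $e^{\sigma x}\,d\mu_\alpha$ for small $|r|$:
\[
(1-r)^{-\alpha-1}\Bigl(1-\sigma+\tfrac{r}{1-r}\Bigr)^{-\alpha-1}=\bigl(1-\sigma+\sigma r\bigr)^{-\alpha-1}=(1-\sigma)^{-\alpha-1}\Bigl(1+\tfrac{\sigma}{1-\sigma}\,r\Bigr)^{-\alpha-1}.
\]
Comparing coefficients of $r^n$ gives
\[
\int L_n^\alpha e^{\sigma x}\,d\mu_\alpha=(1-\sigma)^{-\alpha-1}(-1)^n\Bigl(\tfrac{\sigma}{1-\sigma}\Bigr)^n\binom{n+\alpha}{n}.
\]
Since $\|L_n^\alpha\|_{L^2(\mu_\alpha)}^2=\binom{n+\alpha}{n}=\frac{\Gamma(n+\alpha+1)}{\Gamma(\alpha+1)\,n!}\simeq n^\alpha$, dividing by this norm yields
\[
|\langle \ell_n^\alpha,g_\sigma\rangle_{\mu_\alpha}|\simeq_{\alpha,\sigma} \Bigl(\tfrac{\sigma}{1-\sigma}\Bigr)^n n^{\alpha/2}.
\]
As $\sigma\uparrow 1/q'$, the ratio $\sigma/(1-\sigma)$ increases to $\frac{1/q'}{1-1/q'}=q-1$. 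So for a given $\rho\in(0,q-1)$ I choose $\sigma<1/q'$ with $\sigma/(1-\sigma)=\rho$, which gives \eqref{eq:lower-laguerre}. The step I expect to need the most care is justifying the interchange of sum and integral, because $e^{\sigma x}$ grows. For $|r|$ small, $\sum_n|L_n^\alpha(x)||r|^n$ is dominated by an expression of the form $C e^{\varepsilon x}$, using e.g. the generating function bound with $|r|$ in place of $r$ or the standard estimate $|L_n^\alpha(x)|\le C_\alpha\binom{n+\alpha}{n}e^{x/2}$ up to polynomial factors. Dominated convergence then applies whenever $\sigma+\varepsilon<1$, and both sides are analytic in $r$, so comparing coefficients is legitimate.

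For \eqref{eq:ratio-laguerre-I}: when $p\le 2$ we have $\|\ell_n^\alpha\|_{L^p}\le\|\ell_n^\alpha\|_{L^2}=1$. By \eqref{eq:lower-laguerre} the $\liminf$ is therefore at least $\log\rho$, because the factor $n^{\alpha/2}$ is subexponential. Letting $\rho\uparrow q-1$ gives the claim.

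For the upper bound \eqref{eq:upper-laguerre}, assume $\alpha\ge-1/2$. Set $s=\log(q-1)>0$, so that $q(s)=1+e^{s}=q$ with $p=2$ in \eqref{eq:pre-hypercontractivity}. Since $e^{sL_\alpha}\ell_n^\alpha=e^{-sn}\ell_n^\alpha$, hypercontractivity gives $\|\ell_n^\alpha\|_{L^q}=e^{sn}\|e^{sL_\alpha}\ell_n^\alpha\|_{L^q}\le e^{sn}=(q-1)^n$.

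Finally, for \eqref{eq:ratio-laguerre-II} with $2\le p<q$: the two bounds give $\frac1n\log\|\ell_n^\alpha\|_{L^r}\to\log(r-1)$ for every $r>2$, since $n^{\alpha/2}$ is subexponential and $\rho$ is arbitrary below $r-1$. For $p=2$ the denominator is $1$, so the limit $\log(q-1)$ holds as well. Subtracting the limits for $r=q$ and $r=p$ yields $\log\frac{q-1}{p-1}$.
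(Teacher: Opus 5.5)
Your proof is correct and follows the paper's argument essentially step for step: Hölder duality against $e^{\sigma x}$, Laguerre coefficients read off from the generating function, the choice $\sigma/(1-\sigma)=\rho<q-1$, and hypercontractivity at $s=\log(q-1)$ for the upper bound. You also justify the sum--integral interchange, which the paper leaves implicit. The right majorant for this is $\sum_n |L_n^\alpha(x)|\,|r|^n\le(1-|r|)^{-\alpha-1}\exp\bigl(\tfrac{|r|}{1-|r|}x\bigr)$, which follows from $|L_n^\alpha(x)|\le L_n^\alpha(-x)$ for $x\ge0$. Your alternative $e^{x/2}$ bound would only cover $\sigma<1/2$, i.e.\ $\rho<1$, so it misses exactly the regime $\rho$ close to $q-1>1$.
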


\begin{proof}
We first prove \eqref{eq:lower-laguerre} and \eqref{eq:upper-laguerre}. Write $q'=\frac{q}{q-1}$. By duality,
\begin{equation} \label{eq:dual-laguerre}
\| \ell_n^\alpha \|_{L^q(\mu_\alpha)} = \sup_{\|g\|_{L^{q'}(\mu_\alpha)}=1} \big| \langle \ell_n^\alpha, g \rangle_{\mu_\alpha} \big|.
\end{equation}
For $b\in(0,1/q')$, we define
$$
g_b(x):=(1-q'b)^{(\alpha+1)/q'}\,e^{bx},\qquad x>0.
$$
Since $\int_0^\infty e^{sx}\, d \mu_\alpha(x) =(1-s)^{-(\alpha+1)}$ for $s<1$, we have $\|g_b\|_{L^{q'}(\mu_\alpha)}=1$.

Integrating the generating function \eqref{eq:Laguerre-generating} against $e^{bx}\, d \mu_\alpha$ and comparing coefficients gives
\begin{equation} \label{eq:fourier-expansion-Laguerre}
\int_0^\infty L_n^\alpha(x)\,e^{bx}\, d \mu_\alpha(x)
=
(-1)^n\,\frac{\Gamma(n+\alpha+1)}{\Gamma(\alpha+1)\,n!}\,
\frac{b^n}{(1-b)^{n+\alpha+1}}.
\end{equation}
Recall that $\ell_n^\alpha=\Big(\frac{n!\Gamma(\alpha+1)}{\Gamma(n+\alpha+1)}\Big)^{1/2}L_n^\alpha$. Therefore, by \eqref{eq:dual-laguerre},
\begin{align*}
\|\ell_n^\alpha\|_{L^q(\mu_\alpha)} &\ge
\big| \langle \ell_n^\alpha, g_b \rangle_{\mu_\alpha} \big|\\
& = (1-q'b)^{(\alpha+1)/q'}(1-b)^{-(\alpha+1)}\,
\Big(\frac{\Gamma(n+\alpha+1)}{\Gamma(\alpha+1)\,n!}\Big)^{1/2}
\Big(\frac{b}{1-b}\Big)^n.
\end{align*}
Choose $b = \frac{\rho}{1 + \rho} \in (0, 1/q')$. By Stirling's formula, $\Gamma(n+\alpha+1)/n!\simeq n^\alpha$, we thus obtain the stated lower bound \eqref{eq:lower-laguerre}.

If $\alpha \ge -1/2$, set $s : = \log(q - 1)> 0$. By \eqref{eq:pre-hypercontractivity} with $p=2$ we have $\big\| e^{s L_\alpha} \big\|_{2 \to q}\le 1$. Since $e^{s L_\alpha} \ell_n^\alpha = e^{-sn} \ell_n^\alpha$ and $\| \ell_n^\alpha \|_{L^2(\mu_\alpha)}=1$,
$$
\| \ell_n^\alpha \|_{L^q(\mu_\alpha)}
= e^{sn} \,\| e^{s L_\alpha} \ell_n^\alpha \|_{L^q(\mu_\alpha)}
\le e^{sn} = (q-1)^{n}.
$$

We now turn to the exponential-rate statements \eqref{eq:ratio-laguerre-I} and \eqref{eq:ratio-laguerre-II}. The former follows immediately from \eqref{eq:lower-laguerre} and the bound $\|\ell_n^\alpha\|_{L^p(\mu_\alpha)}\le \|\ell_n^\alpha\|_{L^2(\mu_\alpha)}=1$. If $\alpha\ge -1/2$, then \eqref{eq:ratio-laguerre-II} follows by combining \eqref{eq:lower-laguerre} and \eqref{eq:upper-laguerre} for both $p$ and $q$ (and using $\|\ell_n^\alpha\|_{L^2(\mu_\alpha)}=1$). This completes the proof.
\end{proof}

We now prove the statement of Theorem~\ref{thm:intro-main1} concerning the Poisson--Laguerre semigroup. We first consider the case $\alpha \ge -1/2$. As in the Poisson--Hermite case, by duality it suffices to treat $q > 2$. It follows from Proposition \ref{prop:Laguerre-exp-growth} that, for any $1<p<q<\infty$ with $q>2$,
\begin{equation} \label{eq:blow-up-laguerre}
\kappa_{p, q} = \limsup_{n \to \infty} \frac{\log \big( \| \ell_n^\alpha \|_{L^q(\mu_\alpha)}/ \| \ell_n^\alpha \|_{L^p(\mu_\alpha)} \big)}{\sqrt{n}} = +\infty
\end{equation}
Then Corollary~\ref{cor:SW-spectral-obstruction-II} implies that $\| P^\alpha_t \|_{p \to q} = +\infty$ for any $t > 0$. The case $\alpha > -1$ can be viewed as a particular case of Theorem~\ref{thm:intro-main2}; we refer Section~\ref{sec:blow-up-ou-lauguerre} for the proof.

\subsection{Jacobi: ultracontractivity} \label{sec:jacobi}

In contrast to the hypercontractivity in the Nelson--Gross sense, the following notion of ultracontractivity is preserved under Poisson subordination.

\begin{theorem}\label{thm:ultra-poisson}
Let $(T_s)_{s\ge0}$ be a semigroup of bounded linear operators on $L^2(\mu)$ over a probability space $(X,\mu)$. Assume that there exists $\sigma > 0$ such that for any $s > 0$,
\begin{align} \label{eq:ultra}
\| T_s \|_{1 \to \infty} \lesssim (1 \wedge s)^{-\sigma/2}.
\end{align}
The Poisson subordination of $(T_s)_{s\ge0}$ is given by
$$
P_t = \frac{1}{2\sqrt{\pi}} \int_{0}^{\infty} \frac{t}{s^{3/2}} \, e^{ - \frac{t^2}{4s}} \, T_s \, d s, \quad t > 0,
$$
Then, for any $t > 0$,
$$
\| P_t \|_{1 \to \infty} \lesssim   (1 \wedge t)^{- \sigma}.
$$
\end{theorem}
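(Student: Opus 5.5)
The plan is to use the subordination formula directly. Minkowski's integral inequality for the Bochner integral, viewed as a map $L^1\to L^\infty$, lets us put the norm inside the integral. Since the weight $\eta_t(s):=\frac{t}{2\sqrt{\pi}\,s^{3/2}}e^{-t^2/(4s)}$ is a probability density on $(0,\infty)$, this gives
\begin{equation*}
\|P_t\|_{1\to\infty}\le\int_0^\infty \eta_t(s)\,\|T_s\|_{1\to\infty}\,ds\lesssim\int_0^\infty \eta_t(s)\,(1\wedge s)^{-\sigma/2}\,ds .
\end{equation*}
To make this rigorous I would test against $g\in L^1\cap L^2$ and $h\in L^1\cap L^2$. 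The pairing $\langle P_tg,h\rangle_\mu=\int_0^\infty\eta_t(s)\langle T_sg,h\rangle_\mu\,ds$ is a scalar integral, and we have $|\langle T_sg,h\rangle_\mu|\le\|T_s\|_{1\to\infty}\|g\|_1\|h\|_1$. Density of such $g,h$ then closes the argument, so no vector-valued integration theory is needed.

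It remains to estimate the scalar integral, and I would split it at $s=1$. On $s\ge1$ the integrand is at most $\eta_t(s)$, so that piece contributes at most $1$. On $s<1$ we have to bound $\int_0^1 \eta_t(s)s^{-\sigma/2}\,ds$. Substituting $u=t^2/(4s)$ gives $ds/s^{3/2}=2t^{-1}u^{-1/2}\,du$, and the integral becomes
\begin{equation*}
\frac{1}{\sqrt\pi}\Big(\frac{4}{t^2}\Big)^{\sigma/2}\int_{t^2/4}^{\infty}u^{\sigma/2-1/2}e^{-u}\,du\le \frac{2^\sigma}{\sqrt\pi}\,\Gamma\Big(\frac{\sigma+1}{2}\Big)\,t^{-\sigma}.
\end{equation*}
This quantity is finite because $\sigma>0$.

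Combining the two pieces gives $\|P_t\|_{1\to\infty}\lesssim 1+t^{-\sigma}$. For $t\le1$ this is $\lesssim t^{-\sigma}=(1\wedge t)^{-\sigma}$. For $t\ge1$ we can do better, because the $s<1$ piece is even smaller there: the lower limit $t^2/4$ makes it exponentially small. So the bound is $\lesssim 1=(1\wedge t)^{-\sigma}$.

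The only step needing care is justifying the passage of the norm inside the integral. The measurability of $s\mapsto\langle T_sg,h\rangle$ follows from the strong continuity implicit in defining $P_t$, and the duality formulation above sidesteps everything else. The computation itself is routine Gamma-function bookkeeping.
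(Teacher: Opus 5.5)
Your proposal is correct and follows the paper's proof essentially step for step. You bound $\|P_t\|_{1\to\infty}$ by the subordination integral of $\|T_s\|_{1\to\infty}$, split at $s=1$, and use the substitution $u=t^2/(4s)$ to turn the $s<1$ piece into an incomplete Gamma integral that is $O(t^{-\sigma})$ for $t<1$ and $O(1)$ for $t\ge 1$. Your only addition is the scalar-pairing argument justifying moving the norm inside the integral, which the paper takes for granted.
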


\begin{proof}
By the definition of $P_t$ and \eqref{eq:ultra}, we have
\begin{align*}
\| P_t \|_{1 \to \infty} \le \frac{1}{2\sqrt{\pi}} \int_{0}^{\infty} \frac{t}{s^{3/2}} \, e^{ - \frac{t^2}{4s}} \, \| T_s \|_{1 \to \infty} \, d s \lesssim \int_{0}^{1} \frac{t}{s^{(\sigma + 3)/2}} \, e^{ - \frac{t^2}{4s}} \, d s \, + \, \int_{1}^{\infty} \frac{t}{s^{3/2}} \, e^{ - \frac{t^2}{4s}} \, \, d s.
\end{align*}
Set $u = \frac{t^2}{4s}$, then
\begin{align*}
\int_{0}^{1} \frac{t}{s^{(\sigma + 3)/2}} \, e^{ - \frac{t^2}{4s}} \, d s = 4^{\frac{\sigma+1}{2}} \cdot t^{-\sigma} \int_{\frac{t^2}{4}}^{\infty} u^{\frac{\sigma - 1}{2}} e^{-u} d u =  \begin {cases}
O(t^{-\sigma}) & \text {if \, $t \in (0, 1)$,}\\
O(1)      & \text{if \, $t \in [1, \infty)$.}
\end {cases}
\end{align*}
Combining this with
\begin{align*}
\int_{1}^{\infty} \frac{t}{s^{3/2}} \, e^{ - \frac{t^2}{4s}} \, d s \lesssim 1,
\end{align*}
we complete the proof.
\end{proof}

Applying this theorem to the Poisson--Jacobi semigroup with $\alpha,\beta \ge -1/2$, we obtain from \eqref{eq:jacobi-ultra} that \eqref{eq:ultra} holds with $\sigma = 2(\max\{\alpha,\beta\}+1)$. Hence, Theorem~\ref{thm:ultra-poisson} yields that for any $t > 0$,
\[
\big\| P_t^{\alpha, \beta} \big\|_{1 \to \infty} \lesssim (1 \wedge t)^{-2(\max\{ \alpha, \beta \} +1 )}.
\]
Since $P_t^{\alpha, \beta}$ is an $L^r$-contraction for all $r \in [1, \infty]$, the proof follows by interpolation.

\subsection{Degeneration of hypercontractive bounds in Jacobi limit transitions}\label{sec:unified-characterization}

The three classical polynomial families are linked by standard limit relations (see e.g. \cite{B96, BGL}). We refer to \cite[Appendix~B, (B.46) and (B.49)]{UrbWorkshop} for the two transitions used here, namely Jacobi$\to$Laguerre and ultraspherical/Gegenbauer$\to$Hermite. In this section we discuss the degeneration of hypercontractive bounds in these limits. We begin with the following limit relations for the corresponding eigenpolynomials, using the notation introduced in Section~\ref{sec:facts}.

\begin{proposition}\label{prop:askey-limits}
Fix $n\in\mathbb N_0$.
\begin{enumerate}
\item[(i)] Fix $\alpha>-1$. Then for every $x\ge 0$,
$$
\lim_{\beta\to\infty} J_n^{(\alpha,\beta)}\!\Bigl(1-\frac{2x}{\beta}\Bigr)=L_n^\alpha(x).
$$

\item[(ii)] Let $\lambda>0$.  We write $C_n^\lambda$ for the Gegenbauer (ultraspherical) polynomial normalized as in
\cite[Appendix~B, (B.47)]{UrbWorkshop}, i.e.
$$
C_n^\lambda(x)
:= \frac{\Gamma(\lambda+\tfrac12)\,\Gamma(n+2\lambda)}{\Gamma(2\lambda)\,\Gamma(n+\lambda+\tfrac12)}\,
J_n^{(\lambda-\frac12,\lambda-\frac12)}(x),
\qquad x\in[-1,1].
$$
Then for every $x\in\mathbb R$,
$$
\lim_{\lambda\to\infty}\lambda^{-n/2}\,C_n^\lambda\!\Bigl(\frac{x}{\sqrt{\lambda}}\Bigr)
=\frac{1}{n!}\,H_n(x).
$$
\end{enumerate}
\end{proposition}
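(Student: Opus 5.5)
The plan is to prove both limits directly from explicit finite-sum representations of the polynomials. Since $n$ is fixed, each polynomial is a finite sum with at most $n+1$ terms, so the limit can be taken term by term. Once each coefficient is shown to converge to the corresponding coefficient of $L_n^\alpha$ or $H_n/n!$, nothing further is needed. The step I expect to need the most care is normalization bookkeeping: matching the paper's normalizations (the generating functions \eqref{eq:Laguerre-generating}, \eqref{eq:OU-generating} and the stated definition of $C_n^\lambda$) with the classical explicit formulas.

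For (i), I would use the classical hypergeometric representation
$$
J_n^{(\alpha,\beta)}(y)=\frac{(\alpha+1)_n}{n!}\sum_{k=0}^n\frac{(-n)_k\,(n+\alpha+\beta+1)_k}{(\alpha+1)_k\,k!}\Bigl(\frac{1-y}{2}\Bigr)^k .
$$
Substituting $y=1-2x/\beta$ gives $(1-y)/2=x/\beta$. The $k$-th term then contains the factor $(n+\alpha+\beta+1)_k\,\beta^{-k}$, which is a product of $k$ factors $(n+\alpha+\beta+j)/\beta$, each tending to $1$ as $\beta\to\infty$. The limit is therefore
$$
\frac{(\alpha+1)_n}{n!}\sum_{k=0}^n\frac{(-n)_k}{(\alpha+1)_k\,k!}x^k=\sum_{k=0}^n\binom{n+\alpha}{n-k}\frac{(-x)^k}{k!}.
$$
It remains to identify this with $L_n^\alpha$ in the normalization \eqref{eq:Laguerre-generating}. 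To do this I would expand $(1-r)^{-\alpha-1}\exp(-rx/(1-r))=\sum_k \frac{(-x)^k r^k}{k!}(1-r)^{-\alpha-1-k}$, use the binomial series for $(1-r)^{-\alpha-1-k}$, and read off the coefficient of $r^n$. This coefficient is $\sum_k\frac{(-x)^k}{k!}\binom{n+\alpha}{n-k}$, which is the same expression.

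For (ii), I would first check that the stated definition of $C_n^\lambda$ agrees with the classical Gegenbauer polynomial $C_n^\lambda=\frac{(2\lambda)_n}{(\lambda+\frac12)_n}J_n^{(\lambda-\frac12,\lambda-\frac12)}$. This holds because $(2\lambda)_n=\Gamma(n+2\lambda)/\Gamma(2\lambda)$ and $(\lambda+\tfrac12)_n=\Gamma(n+\lambda+\tfrac12)/\Gamma(\lambda+\tfrac12)$. The classical polynomial has generating function $(1-2yt+t^2)^{-\lambda}$, and expanding it yields the explicit formula
$$
C_n^\lambda(y)=\sum_{k=0}^{\lfloor n/2\rfloor}(-1)^k\frac{(\lambda)_{n-k}}{k!\,(n-2k)!}(2y)^{n-2k}.
$$
Substituting $y=x/\sqrt\lambda$ and multiplying by $\lambda^{-n/2}$, the $k$-th term becomes
$$
(-1)^k\frac{(\lambda)_{n-k}\,\lambda^{-(n-k)}}{k!\,(n-2k)!}(2x)^{n-2k},
$$
since $\lambda^{-n/2}\lambda^{-(n-2k)/2}=\lambda^{-(n-k)}$. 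As $\lambda\to\infty$ we have $(\lambda)_{n-k}\lambda^{-(n-k)}=\prod_{j=0}^{n-k-1}(1+j/\lambda)\to1$. The limit is therefore $\sum_k(-1)^k\frac{(2x)^{n-2k}}{k!(n-2k)!}$.

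To finish, expand $\exp(2xt-t^2)=\sum_{m,k}\frac{(2x)^m t^m}{m!}\frac{(-1)^kt^{2k}}{k!}$. The coefficient of $t^n$ is exactly this sum, and by \eqref{eq:OU-generating} that coefficient equals $H_n(x)/n!$.
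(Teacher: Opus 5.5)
Your argument is correct, and the computations check out: the factor $(n+\alpha+\beta+1)_k\beta^{-k}\to1$, the exponent identity $\lambda^{-n/2}\lambda^{-(n-2k)/2}=\lambda^{-(n-k)}$, and the matching of the limiting sums with the coefficients of $r^n$ and $t^n$ in \eqref{eq:Laguerre-generating} and \eqref{eq:OU-generating}. The interchanges of summation in those generating-function identifications are justified by absolute convergence for small $|r|$ and $|t|$, using $\binom{\alpha+k+m}{m}>0$ for $\alpha>-1$. Your route differs from the paper's in that the paper gives no proof: it quotes the Askey-scheme limits \cite[Appendix~B, (B.46) and (B.49)]{UrbWorkshop} and assumes the normalizations there agree with those fixed in Section~\ref{sec:facts}. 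Your proof is the classical self-contained one via terminating hypergeometric sums. It rests only on more basic facts: the ${}_2F_1$ form of $J_n^{(\alpha,\beta)}$, and the relation between $J_n^{(\lambda-\frac12,\lambda-\frac12)}$ and the generating function $(1-2yt+t^2)^{-\lambda}$. In exchange, it checks the normalization bookkeeping against the paper's own conventions rather than taking it on trust. It also gives convergence of coefficients, not just of values, and hence uniform bounds for large parameters such as $|J_n^{(\alpha,\beta)}(1-2x/\beta)|\le C_n(1+x)^n$ and $\lambda^{-n/2}|C_n^\lambda(x/\sqrt\lambda)|\le C_n(1+|x|)^n$. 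These bounds are exactly the domination invoked in the ``dominated convergence on the rescaled probability spaces'' step of the proof of Theorem~\ref{thm:unified-characterization}.
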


\begin{remark}\label{rem:askey-orthonormal-note}
For convenience, we work with their $L^2$-normalized versions below.  One can also phrase
the limits at the level of orthonormal polynomials; see \cite[Appendix~B]{UrbWorkshop}.
\end{remark}

Next, we introduce two rescaled Jacobi probability spaces that implement these limit relations at the level of $L^p$--norms.
The corresponding time rescalings  $t \mapsto t/\sqrt{\beta}$ and $t \mapsto t/\sqrt{2\lambda}$ are chosen so that
$\sqrt{\lambda_n^{(\alpha,\beta)}}$ remains of order $\sqrt{n}$ along the limit, hence the Poisson multipliers do not
collapse trivially.

\medskip\noindent\textbf{Jacobi$\to$Laguerre rescaling near $x=1$.}
Fix $\alpha>-1$ and let $\beta>-1$ vary.  Recall the Jacobi probability measure
$$
d \mu_{\alpha,\beta}(x)
= Z_{\alpha,\beta}^{-1} \, (1-x)^\alpha(1+x)^\beta\mathbf 1_{(-1,1)}(x)\, dx,
\qquad
Z_{\alpha,\beta}=2^{\alpha+\beta+1}B(\alpha+1,\beta+1).
$$
Define the change of variables
$$
S_\beta:(0,\beta)\to(-1,1),
\qquad
S_\beta(y):=1-\frac{2y}{\beta},
\qquad
S_\beta^{-1}(x)=\frac{\beta}{2}(1-x).
$$
Let $\widehat\mu_{\alpha,\beta}$ be the push-forward probability measure on $(0,\beta)$: for any Borel set $E \subset (0,\beta)$,
$$
\widehat\mu_{\alpha,\beta}(E):=\mu_{\alpha,\beta}(S_\beta(E)).
$$
Then $\widehat\mu_{\alpha,\beta}$ admits the density
$$
d \widehat\mu_{\alpha,\beta}(y)
=
\widehat c_{\alpha,\beta}\,y^\alpha\Bigl(1-\frac{y}{\beta}\Bigr)^\beta
\mathbf 1_{(0,\beta)}(y)\,dy,
\qquad
\widehat c_{\alpha,\beta}
=
\frac{1}{B(\alpha +1, \beta + 1)}\,\beta^{-(\alpha+1)}.
$$
The associated pullback map
$$
U_\beta:L^p(\widehat\mu_{\alpha,\beta})\to L^p(\mu_{\alpha,\beta}),
\qquad
(U_\beta f)(x):=f\!\Bigl(\frac{\beta}{2}(1-x)\Bigr),
$$
is an isometry for each $p\in[1,\infty]$:
$$
\|U_\beta f\|_{L^p(\mu_{\alpha,\beta})} = \|f\|_{L^p(\widehat\mu_{\alpha,\beta})}.
$$

\medskip\noindent\textbf{Ultraspherical/Gegenbauer$\to$Hermite rescaling near $x=0$.}
Let $\lambda>0$ and consider the symmetric Jacobi parameters
$$
\alpha=\beta=\lambda-\frac12,
\qquad
d \mu_{\lambda-\frac12,\lambda-\frac12}(x)
=Z_{\lambda-\frac12,\lambda-\frac12}^{-1} \, (1-x^2)^{\lambda-\frac12}\mathbf 1_{(-1,1)}(x)\, dx,
$$
which correspond to the ultraspherical/Gegenbauer regime (cf.\ \cite[Appendix~B, (B.47)]{UrbWorkshop}).
Define the scaling
$$
R_\lambda:(-\sqrt{\lambda},\sqrt{\lambda})\to(-1,1),
\qquad
R_\lambda(y):=\frac{y}{\sqrt{\lambda}},
\qquad
R_\lambda^{-1}(x)=\sqrt{\lambda}\,x.
$$
Let $\widehat\gamma_\lambda$ be the push-forward probability measure on $(-\sqrt{\lambda},\sqrt{\lambda})$: for any Borel set $E \subset (-\sqrt{\lambda},\sqrt{\lambda})$,
$$
\widehat\gamma_\lambda(E):= \mu_{\lambda-\frac12,\lambda-\frac12}(R_\lambda(E)).
$$
Then $\widehat\gamma_\lambda$ admits the density
$$
d \widehat\gamma_\lambda(y)
=
\widehat d_\lambda
\Bigl(1-\frac{y^2}{\lambda}\Bigr)^{\lambda-\frac12}
\mathbf 1_{(-\sqrt{\lambda},\sqrt{\lambda})}(y)\, dy,
\qquad
\widehat d_\lambda:=\frac{1}{\sqrt{\lambda}\, Z_{\lambda-\frac12,\lambda-\frac12}}.
$$
The pullback map
$$
V_\lambda:L^p(\widehat\gamma_\lambda)\to L^p(\mu_{\lambda-\frac12,\lambda-\frac12}),
\qquad
(V_\lambda f)(x):=f(\sqrt{\lambda}\,x),
$$
is an $L^p$-isometry for all $p\in[1,\infty]$:
$$
\|V_\lambda f\|_{L^p(\mu_{\lambda-\frac12,\lambda-\frac12})}=\|f\|_{L^p(\widehat\gamma_\lambda)}.
$$

The main theorem of this section is as follows.

\begin{theorem}\label{thm:unified-characterization}
Fix $\alpha \ge -1/2$, $1 < p < q < \infty$ and $t>0$. Consider the two rescaled Poisson--Jacobi semigroups
\begin{align}
\widetilde P_t^{(\alpha,\beta)}
&:=U_\beta^{-1}\,P_{t/\sqrt{\beta}}^{\alpha,\beta}\,U_\beta
\quad\text{acting on }L^2(\widehat\mu_{\alpha,\beta}),\\
\widetilde P_t^{(\lambda)}
&:=V_\lambda^{-1}\,P_{t/\sqrt{2\lambda}}^{\lambda-\frac12,\lambda-\frac12}\,V_\lambda
\quad\text{acting on }L^2(\widehat\gamma_\lambda),
\end{align}
where $U_\beta,V_\lambda$ are the $L^p$-isometries defined above.
Then
$$
\big\|\widetilde P_t^{(\alpha,\beta)} \big\|_{L^p(\widehat\mu_{\alpha,\beta})\to L^q(\widehat\mu_{\alpha,\beta})}\xrightarrow[\beta\to\infty]{}+\infty,
\qquad
\big\| \widetilde P_t^{(\lambda)} \big\|_{L^p(\widehat\gamma_{\lambda})\to L^q(\widehat\gamma_{\lambda})}\xrightarrow[\lambda\to\infty]{}+\infty.
$$
\end{theorem}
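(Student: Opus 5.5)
We test on a single eigenfunction of fixed degree $n$, pass to the limit in the parameter, and then let $n\to\infty$ using the Laguerre and Hermite growth estimates. This is Lemma~\ref{lem:unified-eigen-test} applied to the conjugated operators. By duality and self-adjointness, as in the proof of Theorem~\ref{thm:intro-main1}, it suffices to treat $q>2$.

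\emph{Eigenfunctions of the rescaled operators.} The functions $\varphi_n^\beta:=U_\beta^{-1}\jj_n^{(\alpha,\beta)}$ are eigenfunctions of $\widetilde P_t^{(\alpha,\beta)}$ with eigenvalue $\exp\bigl(-t\sqrt{n(n+\alpha+\beta+1)/\beta}\bigr)$. As $\beta\to\infty$ this eigenvalue tends to $e^{-t\sqrt n}$. Similarly, $\psi_n^\lambda:=V_\lambda^{-1}\jj_n^{(\lambda-\frac12,\lambda-\frac12)}$ has eigenvalue $\exp\bigl(-t\sqrt{n(n+2\lambda)/(2\lambda)}\bigr)\to e^{-t\sqrt n}$. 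Since $U_\beta$ and $V_\lambda$ are isometries, Lemma~\ref{lem:unified-eigen-test} gives
\[
\big\|\widetilde P_t^{(\alpha,\beta)}\big\|_{p\to q}\ge e^{-t\sqrt{n(n+\alpha+\beta+1)/\beta}}\,\frac{\|\varphi_n^\beta\|_{L^q(\widehat\mu_{\alpha,\beta})}}{\|\varphi_n^\beta\|_{L^p(\widehat\mu_{\alpha,\beta})}}
\]
for every $n$, and the analogous bound holds in the Gegenbauer case.

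\emph{Convergence of norms for fixed $n$.} By Proposition~\ref{prop:askey-limits}, up to normalizing constants, $\varphi_n^\beta\to\ell_n^\alpha$ pointwise on $(0,\infty)$. We first need the $L^2$-normalizations to match in the limit. The cleanest route is to note that $\widehat\mu_{\alpha,\beta}\to\mu_\alpha$, with densities $\widehat c_{\alpha,\beta}\,y^\alpha(1-y/\beta)^\beta\mathbf 1_{(0,\beta)}$ converging pointwise and dominated by $Cy^\alpha e^{-y}$ (using $(1-y/\beta)^\beta\le e^{-y}$ and $\widehat c_{\alpha,\beta}\to 1/\Gamma(\alpha+1)$). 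The rescaled Jacobi polynomials are polynomials of degree $n$ in $y$ whose coefficients converge, since the explicit hypergeometric coefficients of $J_n^{(\alpha,\beta)}(1-2y/\beta)$ converge to those of $L_n^\alpha$. Dominated convergence then gives $\|\cdot\|_{L^r(\widehat\mu_{\alpha,\beta})}\to\|\cdot\|_{L^r(\mu_\alpha)}$ for each $r$, in particular for $r=2$. This justifies the normalization, and we get
\[
\|\varphi_n^\beta\|_{L^r(\widehat\mu_{\alpha,\beta})}\to\|\ell_n^\alpha\|_{L^r(\mu_\alpha)}, \qquad r\in\{p,q\}.
\]
The Hermite case is analogous. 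Here $(1-y^2/\lambda)^{\lambda-1/2}\le C e^{-y^2}$ for $|y|<\sqrt\lambda$ with $\lambda\ge1$, and $\widehat d_\lambda\to\pi^{-1/2}$. Proposition~\ref{prop:askey-limits}(ii), combined with coefficient convergence, gives convergence to $h_n(\cdot/\sqrt{\cdot})$ up to the variable scaling that matches $\widehat\gamma_\lambda\to\gamma_1$. The density has the form $(1-y^2/\lambda)^\lambda\approx e^{-y^2}$, which matches $\gamma_1$ directly, so I expect the scaling to be consistent with the $\sqrt{2\lambda}$ time rescaling and the eigenvalue $n$.

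\emph{Conclusion.} Combining the two steps,
\[
\liminf_{\beta\to\infty}\big\|\widetilde P_t^{(\alpha,\beta)}\big\|_{p\to q}\ge \sup_n e^{-t\sqrt n}\,\frac{\|\ell_n^\alpha\|_{L^q(\mu_\alpha)}}{\|\ell_n^\alpha\|_{L^p(\mu_\alpha)}} .
\]
This supremum is $+\infty$ by \eqref{eq:blow-up-laguerre}, which holds for $\alpha\ge-1/2$ and $q>2$. The same argument with Proposition~\ref{prop:Hermite-exp-growth} gives the Hermite case.

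The main obstacle is the fixed-$n$ convergence of the $L^r$-norms. This requires the uniform domination of the densities and the convergence of the polynomial coefficients, rather than only the pointwise limits stated in Proposition~\ref{prop:askey-limits}. Within that step, the exact Gegenbauer-to-Hermite scaling of variables is the part I expect to need the most care.
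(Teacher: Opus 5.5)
Your proposal is correct and follows the paper's proof. You use duality to reduce to $q>2$, test on the eigenfunctions $U_\beta^{-1}\jj_n^{(\alpha,\beta)}$ (resp.\ $V_\lambda^{-1}\jj_n^{(\lambda-\frac12,\lambda-\frac12)}$) for fixed $n$, pass the $L^p$ and $L^q$ norms to the limit by dominated convergence, and then let $n\to\infty$ via \eqref{eq:blow-up-laguerre} and Proposition~\ref{prop:Hermite-exp-growth}; your density domination and coefficient-convergence remarks just make explicit what the paper compresses into ``dominated convergence''.

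The Gegenbauer scaling you were unsure about needs no extra adjustment. $V_\lambda^{-1}\jj_n^{(\lambda-\frac12,\lambda-\frac12)}(y)=\jj_n^{(\lambda-\frac12,\lambda-\frac12)}(y/\sqrt\lambda)$ is a constant multiple of $\lambda^{-n/2}C_n^\lambda(y/\sqrt\lambda)$, which converges coefficientwise to $H_n(y)/n!$, while $\widehat\gamma_\lambda\to\gamma_1$. So the limit is $\pm h_n$ itself, and the ratio $\|\cdot\|_q/\|\cdot\|_p$ does not depend on the normalization anyway.
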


\begin{proof}
We treat the Jacobi$\to$Laguerre limit; the Gegenbauer$\to$Hermite limit is analogous. Since $\widetilde P_t^{(\alpha,\beta)}$ is self-adjoint on $L^2(\widehat\mu_{\alpha,\beta})$, by duality we only need to consider the regime $q > 2$.

Let $\lambda_n^{(\alpha,\beta)}=n(n+\alpha+\beta+1)$ be the Jacobi eigenvalues.  Then for each fixed $n$,
$$
\lim_{\beta\to\infty}  \frac{1}{\sqrt{\beta}}\sqrt{\lambda_n^{(\alpha,\beta)}} = \sqrt{n}.
$$
Let $\widehat {\jj}_{n,\beta} := U_\beta^{-1} \jj_n^{(\alpha,\beta)}$, so
$\|\widehat {\jj}_{n,\beta}\|_{L^2(\widehat\mu_{\alpha,\beta})} = 1$ and
$$
\widetilde P_t^{(\alpha,\beta)} \, \widehat \jj_{n,\beta}
=
e^{-(t/\sqrt{\beta})\sqrt{\lambda_n^{(\alpha,\beta)}}}\,\widehat \jj_{n,\beta}.
$$
Consequently, for each $n \in \mathbb N_0$,
$$
\|\widetilde P_t^{(\alpha,\beta)}\|_{L^p(\widehat\mu_{\alpha,\beta})\to L^q(\widehat\mu_{\alpha,\beta})}
\ge
e^{-(t/\sqrt{\beta})\sqrt{\lambda_n^{(\alpha,\beta)}}}\,
\|\widehat \jj_{n,\beta}\|_{L^q(\widehat\mu_{\alpha,\beta})}/\|\widehat \jj_{n,\beta}\|_{L^p(\widehat\mu_{\alpha,\beta})}.
$$

By the Jacobi-to-Laguerre limit relation in Proposition~\ref{prop:askey-limits}\,\textup{(i)} and dominated convergence on the rescaled probability spaces,
for each fixed $n$ and each finite $r\ge1$ one has
$\widehat \jj_{n,\beta}\to \ell_n^\alpha$ in $L^r(\mu_\alpha)$, hence
$\|\widehat \jj_{n,\beta}\|_{L^r(\widehat\mu_{\alpha,\beta})} \to \|\ell_n^\alpha\|_{L^r(\mu_\alpha)}$ for $r \in \{ p, q \}$.
Therefore, for each fixed $n$, as $\beta \to \infty$,
$$
e^{-(t/\sqrt{\beta})\sqrt{\lambda_n^{(\alpha,\beta)}}}\,
\|\widehat \jj_{n,\beta}\|_{L^q(\widehat\mu_{\alpha,\beta})}/\|\widehat \jj_{n,\beta}\|_{L^p(\widehat\mu_{\alpha,\beta})}
\longrightarrow
e^{-t\sqrt{n}} \,\|\ell_n^\alpha\|_{L^q(\mu_\alpha)} / \|\ell_n^\alpha\|_{L^p(\mu_\alpha)}.
$$
By \eqref{eq:blow-up-laguerre}, for any $1 <p < q <\infty$ with $q > 2$,
$$\sup_{n\ge1}e^{-t\sqrt{n}} \,\|\ell_n^\alpha\|_{L^q(\mu_\alpha)} / \|\ell_n^\alpha\|_{L^p(\mu_\alpha)}  =+\infty.$$
Given $M>0$, choose $n$ so that $e^{-t\sqrt{n}} \,\|\ell_n^\alpha\|_{L^q(\mu_\alpha)} / \|\ell_n^\alpha\|_{L^p(\mu_\alpha)}>4M$.
For this $n$, the convergence above yields $\beta_0$ such that for all $\beta\ge\beta_0$,
$$
e^{-(t/\sqrt{\beta})\sqrt{\lambda_n^{(\alpha,\beta)}}}\,
\|\widehat \jj_{n,\beta}\|_{L^q(\widehat\mu_{\alpha,\beta})}/\|\widehat \jj_{n,\beta}\|_{L^p(\widehat\mu_{\alpha,\beta})}>2M.
$$
Hence $\|\widetilde P_t^{(\alpha,\beta)}\|_{p\to q}>2M$ for all $\beta\ge\beta_0$, and the first limit follows.

The ultraspherical/Gegenbauer$\to$Hermite statement is proved in the same way, using Proposition~\ref{prop:askey-limits}\,\textup{(ii)}
and the corresponding rescaled probability measures.
\end{proof}

\begin{remark}\label{rem:unified-intuition}
The degeneration statement is consistent with the limit relations in Proposition~\ref{prop:askey-limits}, once the time parameter is rescaled
so that $\sqrt{\lambda_n^{(\alpha, \beta)}}$ remains of order $\sqrt{n}$.
\end{remark}

\section{Proof of Theorem~\ref{thm:intro-main2}} \label{sec:OU-bernstein-drift}

Before turning to the proof of Theorem~\ref{thm:intro-main2}, we need a factorization lemma for the subordinated semigroup. Let $(T_s)_{s\ge 0}$ be a symmetric Markov semigroup with generator $L$ and invariant probability measure $\mu$. Assume that $L$ is self-adjoint on $L^2(\mu)$. Let $f$ be a Bernstein function with L\'evy--Khintchine triplet $(a, b, \nu)$, and let $S_t^{\, f} : = e^{-t f(-L)}$ denote the subordinated semigroup. Then the following lemma holds.

\begin{lemma}\label{lem:OU-drift-factorization}
Define the nonlinear part of $f$ by
$$
f_0(\lambda):=f(\lambda)-a-b\lambda = \int_{(0,\infty)} \bigl(1- e^{-s\lambda}\bigr) \,d \nu(s).
$$
Then $f_0$ is a Bernstein function with $f_0(0)=0$ and $f_0(\lambda)=o(\lambda)$ as
$\lambda\to\infty$. Moreover, for any $t\ge0$,
\begin{equation} \label{eq:OU-factorization}
S_t^{\,f} =e^{-at}\, e^{-t f_0(-L)}\, e^{tb L} = e^{-at}\, e^{tb L}\, e^{-t f_0(-L)}.
\end{equation}
Finally, $(e^{-t f_0(-L)})_{t\ge0}$ is a symmetric Markov semigroup and hence
$$
\big\| e^{-t f_0(-L)} \big\|_{r \to r} \le 1
\qquad\text{for all }r\in[1,\infty].
$$
\end{lemma}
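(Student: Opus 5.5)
The plan is to work entirely through the spectral calculus of the self-adjoint operator $-L\ge 0$. First we check the properties of $f_0$. Since $f_0(\lambda)=\int_{(0,\infty)}(1-e^{-s\lambda})\,d\nu(s)$ is itself of the form \eqref{eq:bernstein-LK} with triplet $(0,0,\nu)$, it is a Bernstein function, and clearly $f_0(0)=0$.

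For $f_0(\lambda)=o(\lambda)$ we write
$$
\frac{f_0(\lambda)}{\lambda}=\int_{(0,\infty)}\frac{1-e^{-s\lambda}}{\lambda}\,d\nu(s).
$$
The integrand is bounded by $s\wedge \lambda^{-1}\le 1\wedge s$ for $\lambda\ge1$, using $1-e^{-x}\le x\wedge 1$. It also tends to $0$ pointwise as $\lambda\to\infty$. Since $\int(1\wedge s)\,d\nu<\infty$, dominated convergence gives the claim.

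For the factorization \eqref{eq:OU-factorization}, let $E$ be the spectral resolution of $-L$. Each of the three operators $e^{-at}$, $e^{-tf_0(-L)}=\int e^{-tf_0(\lambda)}\,dE(\lambda)$ and $e^{tbL}=\int e^{-tb\lambda}\,dE(\lambda)$ is a bounded Borel function of $-L$, bounded by $1$ because $f_0\ge0$ on $[0,\infty)$. Bounded functional calculus is multiplicative and commutative. Since
$$
e^{-tf(\lambda)}=e^{-at}e^{-tf_0(\lambda)}e^{-tb\lambda},
$$
both orderings in \eqref{eq:OU-factorization} follow immediately.

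The main point is that $(e^{-tf_0(-L)})_{t\ge0}$ is a symmetric Markov semigroup. Symmetry and the semigroup law again follow from functional calculus. For the Markov property we use Bochner subordination. The Bernstein function $f_0$ determines a unique convolution semigroup of sub-probability measures $(\eta_t)_{t\ge0}$ on $[0,\infty)$ with $\int e^{-s\lambda}\,d\eta_t(s)=e^{-tf_0(\lambda)}$ (see \cite{SSV}). Because $f_0(0)=0$, each $\eta_t$ is a probability measure. Applying the spectral theorem and Fubini then gives
$$
e^{-tf_0(-L)}=\int_{[0,\infty)}T_s\,d\eta_t(s)
$$
on $L^2(\mu)$.

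Each $T_s$ is positivity preserving, satisfies $T_s\mathbf 1=\mathbf 1$, and is an $L^r$-contraction for all $r\in[1,\infty]$, since it is Markov with invariant $\mu$. Averaging against a probability measure preserves all three properties. Hence $e^{-tf_0(-L)}\mathbf 1=\mathbf 1$, it is positivity preserving, and
$$
\|e^{-tf_0(-L)}g\|_r\le\int\|T_sg\|_r\,d\eta_t(s)\le\|g\|_r
$$
for $g\in L^2\cap L^r$. This extends by density for $r<\infty$; for $r=\infty$ it holds directly, since $L^\infty\subset L^2$ on a probability space.

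The only step needing care is the existence of $\eta_t$ together with the identification of the Bochner integral with the functional calculus. Both are standard and will be quoted from \cite{SSV,SW} rather than reproved.
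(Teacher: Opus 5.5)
Your proposal is correct and follows essentially the same route as the paper. You identify $f_0$ through its L\'evy--Khintchine triplet $(0,0,\nu)$, prove $f_0(\lambda)=o(\lambda)$ by dominated convergence, obtain the factorization from the functional calculus of $-L$, and get the Markov and contraction properties by writing $e^{-tf_0(-L)}$ as an average of $T_s$ against the probability measures $\eta_t$; you only add a little more detail, namely positivity, $e^{-tf_0(-L)}\mathbf 1=\mathbf 1$, and the density extension to $L^r$.
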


\begin{proof}
By the L\'evy--Khintchine formula \eqref{eq:bernstein-LK}, $f_0$ is a Bernstein function with L\'evy--Khintchine triplet $(0,0,\nu)$; in particular $f_0(0)=0$.  By Bochner subordination,
$$
e^{-t f_0(-L)}= \int_0^\infty T_s \,d \eta_t(s)
$$
for a probability measure $\eta_t$. Since this is an average of contractions, it follows that $\big\|e^{-t f_0(-L)} \big\|_{r\to r} \le 1$ for all $r\in[1,\infty]$.

Now we prove $f_0(\lambda)=o(\lambda)$ as $\lambda\to\infty$. Using
$1-e^{-s\lambda}\le \min\{1,s\lambda\}$ yields
$$
0\le \frac{f_0(\lambda)}{\lambda}
=
\int_{(0,\infty)}\frac{1-e^{-s\lambda}}{\lambda}\, d \nu(s)
\le
\int_{(0,\infty)}(\lambda^{-1} \wedge s)\, d \nu(s) \xrightarrow[\lambda\to\infty]{}0,
$$
where we use dominated convergence and $\int_{(0,\infty)}(1\wedge s)\,d \nu(s) <\infty$.

The factorization \eqref{eq:OU-factorization} follows from functional calculus and
commutativity of Borel functions of the same self-adjoint operator $L$. The proof is complete.
\end{proof}

\begin{remark}
As a direct consequence of Lemma~\ref{lem:OU-drift-factorization}, for a Bernstein function $f$ with L\'evy--Khintchine triplet $(a, b, \nu)$ we have
\[
b = \lim_{\lambda \to \infty} \frac{f(\lambda)}{\lambda}.
\]
\end{remark}

\subsection{Ornstein--Uhlenbeck and Laguerre cases} \label{sec:blow-up-ou-lauguerre}

We begin with the Ornstein--Uhlenbeck case. Let $\gamma_d$ be the Gaussian measure on $\mathbb R^d$ and $L_{\rm OU}$ the Ornstein--Uhlenbeck operator, as in Section \ref{sec:facts}. We first treat the special case $b=0$, equivalently $f(\lambda)=o(\lambda)$ as $\lambda\to\infty$. Our goal is to prove that $\big\| S_t^{\,f} \big\|_{p \to q} = +\infty$ for any $1 < p < q < \infty$ and $t >0$. By duality and the self-adjointness of $S_t^{\,f}$ on $L^2(\gamma_d)$,
\begin{equation} \label{eq:duality}
\big\| S_t^{\,f} \big\|_{p \to q} = \big\| S_t^{\,f} \big\|_{q' \to p'}.
\end{equation}
Therefore, it suffices to restrict ourselves to the regime $q> 2$. By the tensorization argument of Section~\ref{sec:blow-up}, we may reduce the proof to the one-dimensional case. We then apply Theorem~\ref{thm:SW-spectral-obstruction}(i). Indeed, it follows from Proposition~\ref{prop:Hermite-exp-growth} and the assumption
$f(n) = o(n)$ as $n \to \infty$ that, for any $1 < p < q<\infty$ with $q > 2$,
\begin{align*}
\Theta_{p, q, f} &= \limsup_{n \to \infty} \frac{\log \big(\| h_n \|_{L^q(\gamma_1)}/ \| h_n \|_{L^p(\gamma_1)} \big)}{f(n)}\\
 & = \limsup_{n \to \infty} \frac{\log\big(\| h_n \|_{L^q(\gamma_1)}/ \| h_n \|_{L^p(\gamma_1)}  \big)}{n} \cdot \frac{n}{f(n)} = +\infty,
\end{align*}
which, by Theorem~\ref{thm:SW-spectral-obstruction}(i), implies that $\big\|S_t^{\,f} \big\|_{p\to q}=+\infty$ for any $t>0$.

We turn to the case $b > 0$. With $t>0$ fixed, we split the proof into two cases.

\noindent\textbf{Case 1: $q \le 1 + (p-1) e^{2bt}$.} By Lemma~\ref{lem:OU-drift-factorization} applied to $L_{\rm OU}$, we have the factorization
$$
S_t^{\,f} = e^{-at}\, e^{-t f_0(-L_{\rm OU})}\, e^{tb  L_{\rm OU}}.
$$
Since $e^{tb  L_{\rm OU}}$ is the Ornstein-Uhlenbeck semigroup at time $bt$, the hypercontractive estimate \eqref{eq:intro-hypercontractivity} yields $\big\| e^{tb  L_{\rm OU}} \big\|_{p\to q} \leq 1$ whenever $q \le 1 + (p-1)e^{2bt}$. Moreover, $e^{-t f_0(-L_{\rm OU})}$ is an $L^q$-contraction (see Lemma~\ref{lem:OU-drift-factorization}), hence for any $q \le 1 + (p-1)e^{2bt}$,
$$
\big\| S_t^{\,f} \big\|_{p \to q} \le e^{-at} \, \big\| e^{-t f_0(-L_{\rm OU})}  \big\|_{q \to q} \, \big\| e^{tb  L_{\rm OU}} \big\|_{p\to q} \le e^{-at}.
$$

On the other hand, $S_t^{\, f} \mathbf 1 = e^{-at} \mathbf 1$, so
\[
\big\| S_t^{\,f} \big\|_{p \to q} \ge \frac{\big\| S_t^{\, f} \mathbf 1 \big\|_{L^q(\gamma_d)}}{\| \mathbf 1 \|_{L^p(\gamma_d)}} = e^{-at}.
\]
Therefore $\big\| S_t^{\,f} \big\|_{p \to q}=e^{-at}$ for all $q\le 1 + (p-1)e^{2bt}$.

\noindent\textbf{Case 2: $q > 1 + (p-1) e^{2bt}$.} By tensorization as before, it suffices to treat the one-dimensional case. The method combining Theorem~\ref{thm:SW-spectral-obstruction}(ii) with Proposition~\ref{prop:Hermite-exp-growth} does not yield a satisfactory obstruction when $1 <p< 2$. To overcome this difficulty, we adopt a different testing argument. Namely, we consider the exponential family $g_\tau(x) := \exp(\tau x)$ with $\tau > 0$ and test via the bilinear identity: if $T : L^p(\mu)\to L^q(\mu)$ is a bounded linear operator for some $1<p,q<\infty$, then
\begin{equation} \label{eq:bilinear}
\|  T \|_{p \to q} = \sup_{g \in L^p(\mu), \, h \in L^{q'}(\mu)} \frac{\langle T g ,h \rangle_{\mu}}{\| g \|_{L^p(\mu)} \, \| h \|_{L^{q'}(\mu)}}.
\end{equation}

For $\tau > 0$, a direct computation yields that for any $1 \leq r < \infty$,
\begin{equation} \label{eq:expontial-lr-norm}
\| g_\tau \|_{L^r(\gamma_1)} = \exp \!\Big(\frac{r\tau^2}{4}\Big).
\end{equation}
By \eqref{eq:fourier-expansion-OU}, $g_\tau$ admits the Hermite expansion
$$
g_\tau(x) = \exp \!\Big( \frac{\tau^2}{4} \Big) \bigg( 1 + \sum_{n = 1}^{\infty}\frac{\tau^n}{\sqrt{2^n n!}} h_n(x) \bigg).
$$
Since $(h_n)_{n\ge 0}$ forms an orthonormal basis of $L^2(\gamma_1)$ and $S_t^{\, f} h_n = e^{-tf(n)} h_n$, Parseval's relation yields that for any $\tau_1, \tau_2 > 0$,
\begin{equation} \label{eq:basis}
\langle S_t^{\, f} g_{\tau_1} , g_{\tau_2} \rangle_{\gamma_1} = \exp\! \Big( \frac{\tau_1^2 + \tau_2^2}{4} \Big) \bigg( e^{-t f(0)} + \sum_{n = 1}^{\infty}\frac{(\tau_1 \tau_2)^n}{2^n n!} e^{-t f(n)} \bigg) = \exp\! \Big( \frac{\tau_1^2 + \tau_2^2}{4} \Big) F_t\Big( \frac{\tau_1 \tau_2}{2}  \Big),
\end{equation}
where
\[
F_t(z) := e^{-t f(0)} + \sum_{n = 1}^{\infty}\frac{e^{-t f(n)}}{n!} z^n.
\]

Recall that $f(\lambda) = b \lambda + o(\lambda)$ as $\lambda \to \infty$. We claim that, for each fixed $t> 0$,
\begin{equation} \label{eq:asy-of-F}
\lim_{z \to +\infty} \frac{\log F_t (z)}{z} = e^{-b t}.
\end{equation}
By \eqref{eq:bernstein-LK}, we have $f(n) \ge  b n$ for each $n \in \mathbb N$. Hence, for $z \ge 0$,
\[
F_t(z) \le 1 + \sum_{n = 1}^{\infty}\frac{e^{-t b n}}{n!} z^n  = \exp\! \Big(z e^{-b t} \Big),
\]
and therefore
\[
\limsup_{z \to +\infty} \frac{\log F_t (z)}{z} \le e^{-b t}.
\]
On the other hand, fix $\varepsilon > 0$, by $f(n) = bn + o(n)$, there exists $M > 0$ such that for each $n \ge M$, $f(n) < (b + \varepsilon ) n$. For $n \ge M$, we set $z_n :=  e^{(b + \varepsilon)t} n$. Then for any $z_n \le z < z_{n+1}$,
\[
F_t(z) \ge F_t(z_n) \ge \frac{z_n^n}{n!} \, e^{-t f(n)} > \frac{(e^{(b + \varepsilon)t} n)^n}{n!} e^{-t (b + \varepsilon) n} = \frac{n^n}{n!}.
\]
By Stirling's formula, $\log (n^n/n!) = n + O(\log n)$. Since $n =  e^{-(b + \varepsilon)t} z + O(1)$, this yields
\[
\log F_t(z) \ge z e^{- (b + \varepsilon)t} + O(\log z),
\]
and consequently,
\[
\liminf_{z \to +\infty} \frac{\log F_t (z)}{z} \ge e^{-  (b + \varepsilon)t}.
\]
Letting $\varepsilon \downarrow 0$ and combining with the limsup bound, we obtain \eqref{eq:asy-of-F}, which implies that, as $z \to +\infty$,
\[
\log F_t (z) = z e^{-bt} + o(z).
\]

Now, by \eqref{eq:expontial-lr-norm} and \eqref{eq:basis}, as $\tau_1 \tau_2 \to +\infty$,
\begin{equation}
\begin{split}
&\log \left( \frac{\langle S_t^{\, f} g_{\tau_1} , g_{\tau_2} \rangle_{\gamma_1}}{\| g_{\tau_1} \|_{L^p(\gamma_1)} \, \| g_{\tau_2} \|_{L^{q'}(\gamma_1)}}  \right) = -\frac14 \bigg( (p-1) \tau_1^2 + (q' - 1) \tau_2^2  \bigg) + \log F_t  \Big( \frac{\tau_1 \tau_2}{2}  \Big)
\\
& \quad = -\frac14 \bigg( (p-1) \tau_1^2 + (q' - 1) \tau_2^2 - 2 e^{-bt} \tau_1 \tau_2 \bigg) + o(\tau_1 \tau_2).
\end{split}
\end{equation}
If $q > 1 + (p -1) e^{2bt}$, namely $e^{-2bt} - (p-1)(q'-1) > 0$, there exists $k > 0$ such that
\[
(p-1) k^2  - 2 e^{-bt} k + (q' - 1) < 0.
\]
We take $\tau_1 = k \tau_2$ and let $\tau_2 \to +\infty$,
\[
\log \left( \frac{\langle S_t^{\, f} g_{\tau_1} , g_{\tau_2} \rangle_{\gamma_1}}{\| g_{\tau_1} \|_{L^p(\gamma_1)} \, \| g_{\tau_2} \|_{L^{q'}(\gamma_1)}}  \right) = -\frac14 \big[  (p-1) k^2 - 2 e^{-bt} k + (q' - 1)  + o(1) \big] \tau_2^2 \to +\infty.
\]
By \eqref{eq:bilinear}, we conclude the proof.

For the Laguerre operator with $\alpha\ge -1/2$, the boundedness regime follows from \eqref{eq:pre-hypercontractivity} by the same argument as in the Ornstein--Uhlenbeck case. For the blow-up regime, we continue to use the bilinear test \eqref{eq:bilinear} with the exponential family $g_\tau$.

By the computation in the proof of Proposition~\ref{prop:Laguerre-exp-growth}, for any $1 \le r < \infty$, $g_\tau \in L^r(\mu_\alpha)$ if and only if $\tau \in (0, 1/r)$. Compared with the Ornstein--Uhlenbeck case, the difficulty here is that $g_\tau$ may fail to lie in $L^2(\mu_\alpha)$ so that we cannot use Parseval's relation directly. We first prove the following more general result.

\begin{theorem}[A necessary condition for $L^p \to L^q$ Laguerre multipliers]\label{thm:parseval}
Let $\alpha > -1$ and $1 < p, q< \infty$. Let $A$ be a spectral multiplier associated with a bounded non-negative sequence $(a_n)_{n \ge 0}$, i.e., $A \ell_n^{\alpha} = a_n \, \ell_n^{\alpha}$. Assume that $A$ extends to a bounded linear operator from $L^p(\mu_\alpha)$ to $L^q(\mu_\alpha)$. Then for any $\tau_1 \in (0, 1/p)$ and $\tau_2 \in (0, 1/q')$,
\begin{equation} \label{eq:parseval-I}
\begin{split}
 \langle A g_{\tau_1}, g_{\tau_2} \rangle_{\mu_\alpha} = (1 - \tau_1)^{-(\alpha + 1)} (1 - \tau_2)^{-(\alpha + 1)} \bigg(a_0 + \sum_{n = 1}^{\infty} a_n \frac{\Gamma(n + \alpha+1)}{\Gamma(\alpha+1) n!}  \varrho(\tau_1, \tau_2)^n\bigg) < \infty,
\end{split}
\end{equation}
where
\[
\varrho(\tau_1, \tau_2) : = \frac{\tau_1 \tau_2}{(1 - \tau_1) (1 - \tau_2)} \ge 0.
\]
Consequently, the sequence $(a_n)_{n \ge 0}$ must satisfy
\begin{equation} \label{eq:radius}
\limsup_{n \to \infty} a_n^{1/n} \le \frac{p-1}{q-1}.
\end{equation}
Moreover, there exists a constant $C_{\alpha, p, q} > 0$ such that for any $n \in \mathbb N_0$,
\begin{equation} \label{eq:control}
a_n \le C_{\alpha, p, q} \, \| A \|_{p \to q} \bigl(  \frac{p-1}{q-1} \bigr)^n (n+1)^{(\alpha + 1)(p^{-1} - q^{-1})+1}.
\end{equation}
\end{theorem}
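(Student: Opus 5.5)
The plan is to establish \eqref{eq:parseval-I} first for small parameters, where Parseval is available, and then push it to the full range $\tau_1\in(0,1/p)$, $\tau_2\in(0,1/q')$ by analytic continuation combined with positivity of the coefficients. The bounds \eqref{eq:radius} and \eqref{eq:control} will then be read off from the finiteness of the series together with Hölder's inequality.

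\textbf{Step 1 (small parameters).} Write $c_n:=\Gamma(n+\alpha+1)/(\Gamma(\alpha+1)n!)$. By \eqref{eq:fourier-expansion-Laguerre} and the normalization of $\ell_n^\alpha$,
$$\langle g_\tau,\ell_n^\alpha\rangle_{\mu_\alpha}=(-1)^n(1-\tau)^{-(\alpha+1)}c_n^{1/2}\Bigl(\frac{\tau}{1-\tau}\Bigr)^n .$$
For $\tau_1,\tau_2\in(0,1/2)$ both $g_{\tau_i}$ lie in $L^2(\mu_\alpha)$. The operator $A$ is bounded on $L^2$, being a multiplier with bounded symbol, and it agrees with the $L^p\to L^q$ extension on $L^2\cap L^p$ (I take this compatibility as part of the hypothesis). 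Parseval then gives \eqref{eq:parseval-I}. The signs $(-1)^n$ cancel in the product, so every term of the series is non-negative.

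\textbf{Step 2 (extension to the full range).} Fix $\tau_2\in(0,1/2)$ first. The map $\tau_1\mapsto g_{\tau_1}$ extends holomorphically into $L^p(\mu_\alpha)$ on $\{\operatorname{Re}\tau_1<1/p\}$, by dominated convergence for the difference quotients. Hence $\tau_1\mapsto\langle Ag_{\tau_1},g_{\tau_2}\rangle$ is holomorphic there, because $A:L^p\to L^q$ is bounded and $g_{\tau_2}\in L^{q'}$.

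In the variable $w=\tau_1/(1-\tau_1)$, the right-hand side of \eqref{eq:parseval-I} multiplied by $(1-\tau_1)^{\alpha+1}$ is a power series in $w$ with non-negative coefficients. It coincides with a function holomorphic in a neighbourhood of the real segment $[0,w_p)$, where $w_p=\frac{1/p}{1-1/p}=\frac1{p-1}$. By Pringsheim's theorem, a power series with non-negative coefficients has a singularity at its positive radius of convergence. The radius is therefore at least $w_p$, and the identity holds on the whole interval $\tau_1\in(0,1/p)$.

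Now fix such a $\tau_1$ and repeat the argument in $\tau_2$, using holomorphy of $\tau_2\mapsto g_{\tau_2}$ into $L^{q'}$ on $\{\operatorname{Re}\tau_2<1/q'\}$. This gives \eqref{eq:parseval-I} for $\tau_2\in(0,1/q')$ with the series finite. I expect this step to be the main obstacle, specifically checking the holomorphy and the Pringsheim argument carefully, since the parameter region must be connected and contain the starting square.

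\textbf{Step 3 (the bounds \eqref{eq:radius} and \eqref{eq:control}).} The supremum of $\varrho$ over the admissible range is
$$\frac1{p-1}\cdot\frac1{q'-1}=\frac{q-1}{p-1}.$$
Finiteness of the series for every $\varrho<\frac{q-1}{p-1}$ gives $\limsup (a_nc_n)^{1/n}\le\frac{p-1}{q-1}$. Since $c_n\simeq n^\alpha$ grows only polynomially, \eqref{eq:radius} follows.

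For \eqref{eq:control}, keep only the $n$-th term and apply Hölder:
$$a_nc_n\varrho^n\le(1-\tau_1)^{\alpha+1}(1-\tau_2)^{\alpha+1}\|A\|_{p\to q}\|g_{\tau_1}\|_p\|g_{\tau_2}\|_{q'}.$$
The norms are explicit: $\|g_{\tau_1}\|_p=(1-p\tau_1)^{-(\alpha+1)/p}$ and $\|g_{\tau_2}\|_{q'}=(1-q'\tau_2)^{-(\alpha+1)/q'}$. Choose $\tau_1=\frac1p\bigl(1-\frac1{n+1}\bigr)$ and $\tau_2=\frac1{q'}\bigl(1-\frac1{n+1}\bigr)$. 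Then $\varrho^n\gtrsim\bigl(\frac{q-1}{p-1}\bigr)^n$ with a constant uniform in $n$, because $\varrho$ differs from its supremum by $O(1/n)$ in relative terms.

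The two norm factors together are $\simeq(n+1)^{(\alpha+1)(1/p+1/q')}$. Dividing by $c_n\simeq(n+1)^\alpha$ leaves the exponent
$$(\alpha+1)\Bigl(\frac1p+1-\frac1q\Bigr)-\alpha=(\alpha+1)\Bigl(\frac1p-\frac1q\Bigr)+1,$$
which is exactly the power in \eqref{eq:control}. The case $n=0$ is absorbed into the constant $C_{\alpha,p,q}$.
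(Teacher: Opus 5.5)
Your proof is correct, but for \eqref{eq:parseval-I} it takes a genuinely different route from the paper.

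The paper never applies Parseval to $g_\tau$ itself. It regularizes with the multipliers $M_\varepsilon$ of symbol $e^{-\varepsilon\lambda^\omega}$, $\omega>1$, which push $g_\tau$ into $L^2(\mu_\alpha)$. It then uses the H\"ormander-type functional calculus of \cite{CD} to get bounds on $L^p$ and $L^{q'}$ uniform in $\varepsilon$, together with strong convergence $M_\varepsilon\to I$. Finally it lets $\varepsilon\to0$ by monotone convergence in the series, treating all admissible $(\tau_1,\tau_2)$ at once.

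You instead use Parseval only on a small square where $g_{\tau_1},g_{\tau_2}\in L^2$. You reach the full range through holomorphy of $\tau\mapsto e^{\tau x}$ with values in $L^p$ (resp.\ $L^{q'}$) and the Vivanti--Pringsheim theorem, first in $\tau_1$ and then in $\tau_2$.

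Your route is more elementary and self-contained: no deep multiplier theorem, and no choice of $\omega$ tied to the angle $\theta_{p,q}$. The paper's regularization, by contrast, does not rely on analytic dependence of the test family on a parameter, so it is a more portable approximation device. Both arguments use $a_n\ge0$ in an essential way.

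The connectedness issue you flag in Step 2 is harmless. The map $w=\tau_1/(1-\tau_1)$ sends $\{\operatorname{Re}\tau_1<1/p\}$ onto the open disc with diameter $(-1,\frac1{p-1})$. That disc is convex, so its intersection with the disc of convergence is connected, and the identity theorem plus Pringsheim go through as you describe.

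Your derivation of \eqref{eq:radius} and \eqref{eq:control} is the paper's. You use the simpler choice $\tau_1=\frac1p\cdot\frac{n}{n+1}$, $\tau_2=\frac1{q'}\cdot\frac{n}{n+1}$ in place of the paper's optimized $\tau_{1,n},\tau_{2,n}$, and it gives the same exponent.

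Three small corrections:
\begin{enumerate}
\item The starting square must be $(0,\min\{\frac12,\frac1p\})\times(0,\min\{\frac12,\frac1{q'}\})$ rather than $(0,\frac12)^2$. You need $g_{\tau_1}\in L^p\cap L^2$ and $g_{\tau_2}\in L^{q'}\cap L^2$, and your Step 2 already uses $g_{\tau_2}\in L^{q'}$.
\item The compatibility of the $L^2$ operator with the $L^p\to L^q$ extension need not be assumed. On a probability space $L^2\cap L^p=L^{\max\{2,p\}}$, and polynomial approximants in that space converge in both norms, so the two extensions agree there.
\item For $n=0$ your choice degenerates to $\tau_1=\tau_2=0$. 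Use $a_0=\langle A\mathbf 1,\mathbf 1\rangle_{\mu_\alpha}\le\|A\|_{p\to q}$ instead of ``absorbing'' this case into the constant.
\end{enumerate}
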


\begin{proof} [Proof of Theorem~\ref{thm:parseval}]

To establish \eqref{eq:parseval-I}, we use an approximation argument based on the  H\"ormander-type holomorphic functional calculus developed in \cite{CD}. For $1 <p, q< \infty$, define $\theta_{p, q} :=  \arcsin (\max \{ |1 - 2/p|, \, |1 - 2/q| \})$ and choose $\omega \in\bigl(1, \frac{\pi}{2\theta_{p,q}} \bigr)$. For any $\varepsilon > 0$, let $M_\varepsilon$ be the spectral multiplier with symbol $m_\varepsilon(\lambda): = \exp(-\varepsilon \lambda^\omega)$, i.e., $M_\varepsilon \ell_n^{\alpha} = e^{-\varepsilon n^\omega} \, \ell_n^{\alpha}$.

\begin{lemma} \label{lem:approximation}
Let $\alpha > -1$ and $p,q,\omega$ be as above. Then the family of spectral multipliers $(M_\varepsilon)_{\varepsilon > 0}$ has the following properties:
\begin{enumerate}
\item (\emph{Uniform boundedness}) Each $M_\varepsilon$ extends to a bounded operator on both $L^p(\mu_\alpha)$ and $L^{q'}(\mu_\alpha)$. Moreover,
\[
\sup_{\varepsilon > 0} \| M_\varepsilon \|_{p \to p} < \infty, \qquad \sup_{\varepsilon > 0} \| M_\varepsilon \|_{q' \to q'} < \infty.
\]

\item (\emph{Strong convergence}) As $\varepsilon\to0$, we have $M_\varepsilon\to I$ strongly on
$L^p(\mu_\alpha)$ and on $L^{q'}(\mu_\alpha)$; that is, for $r\in\{p, q'\}$ and any $h \in L^r(\mu_\alpha)$,
\[
\|M_\varepsilon h - h\|_{L^r(\mu_\alpha)} \to 0.
\]

\item (\emph{Regularization of the exponential family}) For any $\varepsilon > 0$ and $\tau \in \bigl( 0, \frac{1}{p \wedge q'} \bigr) $, $M_\varepsilon g_\tau \in L^2(\mu_\alpha)$. Moreover, for any $n \in \mathbb N_0$,
\begin{equation} \label{eq:fourier-coefficient}
\langle M_\varepsilon g_{\tau}, \ell_n^{\alpha} \rangle_{\mu_\alpha} = (-1)^n e^{-\varepsilon n^\omega} (1 - \tau)^{-(\alpha + 1)} \Big(\frac{\Gamma(n+\alpha+1)}{\Gamma(\alpha+1)\,n!}\Big)^{1/2}
\Big(\frac{\tau}{1- \tau}\Big)^n.
\end{equation}
\end{enumerate}
\end{lemma}

\begin{proof}
(a) Fix $\theta_{p, q} < \theta < \frac{\pi}{2 \omega}$. Then the function $m_\varepsilon(z) = \exp(-\varepsilon z^\omega)$ is a bounded holomorphic function on the sector $\big\{ z \in \mathbb C \backslash \{0\} \, ; \, |\rm{arg} \,z | < \theta \big\}$. Moreover,
\[
\sup_{\varepsilon > 0} \sup_{\lambda > 0} \big|  m_\varepsilon(e^{\pm i \theta} \lambda) \big| \le \sup_{u > 0}  e^{-\cos(\omega \theta) u} = 1.
\]
By the Cauchy theorem, the Mihlin--H\"ormander condition required in \cite[Theorem 1]{CD} holds on the smaller sector $\big\{ z \in \mathbb C \backslash \{0\} \, ; \, |\rm{arg} \,z | < \theta_{p, q} \big\}$ uniformly in $\varepsilon > 0$. Then \cite[Theorem 1]{CD} yields the desired conclusion.

(b) For polynomials, the claim is clear since $\lim_{\varepsilon \to 0} m_\varepsilon(n) = 1$ for any $n \in \mathbb N_0$. Therefore, (b) follows directly from (a) and the density of polynomials in $L^r(\mu_\alpha)$.

(c) Write $r : = p \wedge q'$. For our choice of $\tau$, $g_\tau \in L^r(\mu_\alpha)$. Since $M_\varepsilon$ is bounded on $L^r(\mu_\alpha)$ and $\ell_n^{\alpha}\in L^{r'}(\mu_\alpha)$, we may use the $L^r$--$L^{r'}$ duality; moreover, as $M_\varepsilon$ is a spectral multiplier of the self-adjoint operator $-L_\alpha$, it is symmetric on polynomials. Hence
\[
\langle M_\varepsilon g_{\tau}, \ell_n^{\alpha} \rangle_{\mu_\alpha} = \langle  g_{\tau}, M_\varepsilon \ell_n^{\alpha} \rangle_{\mu_\alpha} = e^{-\varepsilon n^\omega} \langle g_{\tau}, \ell_n^{\alpha} \rangle_{\mu_\alpha}.
\]
By \eqref{eq:fourier-expansion-Laguerre}, we have
\[
\langle g_{\tau}, \ell_n^{\alpha} \rangle_{\mu_\alpha} = (-1)^n (1 - \tau)^{-(\alpha + 1)} \Big(\frac{\Gamma(n+\alpha+1)}{\Gamma(\alpha+1)\,n!}\Big)^{1/2}
\Big(\frac{\tau}{1- \tau}\Big)^n.
\]
Substituting this identity yields \eqref{eq:fourier-coefficient}. To show that $M_\varepsilon g_\tau\in L^2(\mu_\alpha)$, it suffices to verify that
\[
\sum_{n=0}^\infty \big|\langle M_\varepsilon g_\tau,\ell_n^{\alpha}\rangle_{\mu_\alpha}\big|^2<\infty,
\]
this follows at once from \eqref{eq:fourier-coefficient} and the fact that $\omega >1$.
\end{proof}

Then we apply Lemma~\ref{lem:approximation} to prove \eqref{eq:parseval-I}. Indeed, combining the $L^p \to L^q$ boundedness of $A$ and Lemma~\ref{lem:approximation} (b), we have
\[
\langle A g_{\tau_1} , g_{\tau_2} \rangle_{\mu_\alpha} = \lim_{\varepsilon \to 0} \, \langle A M_\varepsilon g_{\tau_1} , M_\varepsilon g_{\tau_2} \rangle_{\mu_\alpha}.
\]
On the other hand, by Lemma~\ref{lem:approximation} (c), $M_\varepsilon g_{\tau_1},  M_\varepsilon g_{\tau_2} \in L^2(\mu_\alpha)$. Since $(a_n)_{n\ge0}$ is bounded, $A$ extends to a bounded operator on $L^2(\mu_\alpha)$ with $\|A\|_{2\to2}=\sup_{n\ge0} a_n$. Therefore, by Parseval's relation and \eqref{eq:fourier-coefficient},
\begin{align*}
& \langle A M_\varepsilon g_{\tau_1} , M_\varepsilon g_{\tau_2} \rangle_{\mu_\alpha}\\
& \quad = (1 - \tau_1)^{-(\alpha + 1)} (1 - \tau_2)^{-(\alpha + 1)} \bigg(a_0 + \sum_{n = 1}^{\infty} e^{-2\varepsilon n^\omega} a_n \frac{\Gamma(n + \alpha+1)}{\Gamma(\alpha+1) n!}  \varrho(\tau_1, \tau_2)^n\bigg)
\end{align*}
Letting $\varepsilon \to 0$ and using the monotone convergence theorem yields \eqref{eq:parseval-I}.

Notice that
\[
\sup_{\tau_1 \in (0, 1/p), \, \tau_2 \in (0, 1/q')} \varrho(\tau_1, \tau_2) = \frac{q - 1}{p-1}.
\]
Hence, by \eqref{eq:parseval-I} and $\Gamma(n+\alpha +1)/n! \simeq (n+1)^\alpha$, the series $\sum_{n = 0}^{\infty} a_n (n+1)^\alpha z^n$
converges for all $z \in \bigl(0, \frac{q-1}{p-1} \bigr)$, which implies \eqref{eq:radius}. Moreover, by \eqref{eq:bilinear},
\begin{equation*}
\langle A g_{\tau_1} , g_{\tau_2} \rangle_{\mu_\alpha} \le \| A \|_{p \to q} \| g_{\tau_1} \|_{L^p(\mu_\alpha)} \, \| g_{\tau_2} \|_{L^{q'}(\mu_\alpha)} = \| A \|_{p \to q} (1 - p \tau_1)^{-(\alpha + 1)/p} \, (1 - q' \tau_2)^{-(\alpha + 1)/q'}.
\end{equation*}
Combining this with \eqref{eq:parseval-I} and note that each term in \eqref{eq:parseval-I} is non-negative, we yield for any $n \in \mathbb N_0$,
\begin{equation} \label{eq:control-II}
a_n (n+1)^\alpha \varrho(\tau_1, \tau_2)^n \leq c_{p, q, \alpha}\, \| A \|_{p \to q} (1 - p \tau_1)^{-(\alpha + 1)/p} \, (1 - q' \tau_2)^{-(\alpha + 1)/q'}.
\end{equation}
We choose
\[
\tau_{1, n} := \frac{n}{p[n + (\alpha+1) (p^{-1} + {q'}^{-1})]}, \qquad \tau_{2, n} := \frac{n}{q' [n + (\alpha+1) (p^{-1} + {q'}^{-1})]}.
\]
Then a direct computation gives
\[
\varrho(\tau_{1, n}, \tau_{2, n}) = \frac{q-1}{p-1} \, \big( 1 + O(1/n) \big),
\]
where the implicit constants depend only on $\alpha, p, q$. Substituting $(\tau_1, \tau_2) = (\tau_{1, n}, \tau_{2, n})$ into \eqref{eq:control-II} gives \eqref{eq:control}. The proof is complete.
\end{proof}

We now turn to the subordinated semigroups $S_t^{\,f}$ in Theorem~\ref{thm:intro-main2}. Note that $S_t^{\,f}$ is the spectral multiplier associated with $e^{-tf(n)}$. By Lemma~\ref{lem:OU-drift-factorization},
\[
\lim_{n \to \infty} e^{-t f(n)/n} = e^{-bt},
\]
Theorem~\ref{thm:parseval} implies that, if $e^{-bt} >  (p-1)/(q-1)$, namely $q > 1+ (p-1)e^{bt}$, then $\big\| S_t^{\,f} \big\|_{p\to q} = +\infty$. This completes the proof. We finally remark that the above proof holds for all $\alpha > -1$, and taking $f(\lambda) = \sqrt{\lambda}$ yields the blow-up statement for the Poisson--Laguerre semigroup.

\subsection{Jacobi case}\label{sec:jacobi-case}

In this section we address the Jacobi case with $\alpha, \beta \ge -1/2$. By \cite[Theorem 3.3.15(2)]{W05}, the ultracontractivity of the form
\[
\big\| S_t^{\, f}   \big\|_{1 \to \infty} \lesssim (1 \wedge t)^{-\sigma/2}, \quad t > 0
\]
is equivalent to the super Poincar\'e inequality
$$
\|u\|_{L^2(\mu)}^2
\le r\,\langle f(-L)u,u\rangle_{L^2(\mu)}+ c(1 + r^{-\sigma/2})\,\| u \|_{L^1(\mu)}^2,
\qquad r>0,\ u\in D(f(-L)),
$$
for some $c > 0$. In particular, by \eqref{eq:jacobi-ultra} we have
$$
\|u\|_{L^2(\mu_{\alpha, \beta})}^2
\le r\,\langle (-L_{\alpha, \beta}) u,u\rangle_{L^2(\mu_{\alpha, \beta})}+ \beta(r) \, \| u \|_{L^1(\mu_{\alpha, \beta})}^2,
\qquad r>0,\ u\in D(L_{\alpha, \beta}),
$$
with $\beta(r) = c_{\alpha, \beta} \big( 1 + r^{-(\max\{\alpha, \beta \}+1)} \big)$. By Proposition~\ref{prop:SW-superP}, this yields
$$
\|u\|_{L^2(\mu_{\alpha, \beta})}^2
\le r\,\langle f(-L_{\alpha, \beta})u,u\rangle_{L^2(\mu_{\alpha, \beta})}+ \beta_f(r) \,\| u \|_{L^1(\mu_{\alpha, \beta})}^2,
\qquad r>0,\ u\in D(f(-L_{\alpha, \beta})),
$$
where
$$
\beta_f(r) : = 4\,\beta\!\left(\frac{1}{2\,f^{-1}(2/r)}\right).
$$

Assume $\liminf_{\lambda \to \infty} \lambda^{-\theta} f(\lambda) > 0$. Then there exist $\lambda_0, C > 0$ such that $f^{-1}(\lambda) \le C \lambda^{1/\theta}$ for any $\lambda \ge \lambda_0$. Consequently, for $r \in (0, 2\lambda_0^{-1}]$,
\[
\frac{1}{2\,f^{-1}(2/r)} \ge (2^{1+1/\theta}C)^{-1} r^{1/\theta} \simeq r^{1/\theta}.
\]
Combining this with the definition of $\beta_f$, we obtain, for any $r > 0$,
$$
\beta_f(r) \lesssim 1 + r^{- \frac{\max\{ \alpha, \beta \} + 1}{\theta}}.
$$
By \cite[Theorem~3.3.15(2)]{W05} again, this implies
\[
\big\| S_t^{\, f}   \big\|_{1 \to \infty} \lesssim (1 \wedge t)^{-\frac{(\max\{ \alpha, \beta \} + 1)}{\theta}}, \quad t > 0.
\]
The proof follows by interpolation.

\end{document}